\newcommand{\bs}[1]{\mbox{$\boldsymbol{#1}$}} 
\pgfplotsset{compat=1.18}
\newtheorem{theorem}{Theorem}[section]
\newtheorem{lemma}[theorem]{Lemma}
\newtheorem{proposition}[theorem]{Proposition}
\newtheorem{remark}[theorem]{Remark}
\newtheorem{definition}[theorem]{Definition}
\newtheorem{corollary}[theorem]{Corollary}
\newtheorem{example}[theorem]{Example}
\title[Young integration on self-similar set in intervals]
{A Young-type integration on self-similar sets in intervals} 
\author[T. MARUYAMA]{Takashi MARUYAMA}
\address{NEC Laboratories Europe, Kurfürsten-Anlage 36, 69115 Heidelberg, Germany}
\email{Takashi.Maruyama@neclab.eu}
\author[T. SETO]{Tatsuki SETO}
\address{General Education and Research Center, Meiji Pharmaceutical University, 
2-522-1 Noshio, Kiyose-shi, Tokyo, Japan}
\email{tatsukis@my-pharm.ac.jp}
\subjclass[2020]{Primary 26A42; Secondary 28A80.}
\keywords{Young integration, Self-similar set, Cantor set}
\begin{document}

\begin{abstract} 
We introduce a generalization of the Young integration on 
self-similar sets defined in a closed interval  
and give a sufficient condition of its integrability. 
We also prove 
integration by substitution, 
integration by parts 
and term-by-term integration 
and give examples of the properties. 
\end{abstract}

\maketitle

\section*{Introduction}  
Integration is used in a wide range of fields. 
One representative integration is the integration by the Lebesgue measure, which serves as a fundamental ingredient in expanding on the theory of the Riemann integration. 
The Lebesgue integration has abundant applications for sets with positive measures, such as open sets or their closure in $\mathbb{R}^{n}$. 
On the other hand, the Lebesgue integration on a set with the Lebesgue measure $0$ 
cannot have favorable applications since the Lebesgue integration of any functions on such a set is trivial.   
Examples of a set of the measure $0$ can be found in fractal sets.  
Indeed, the middle-third Cantor set, a typical example of fractal sets, 
has the measure $0$, 
which motivates one to seek other types of integration for fractal sets. 
 
For fractal sets, some works employ the Hausdorff measure to define integration on fractal sets. In the context of noncommutative geometry, there are many studies that relate the Dixmier trace with the Hausdorff probability measure on self-similar sets; see for example \cite{MR2357322,MR1303779,MR3743228,MR4651742}. 
There is also a work \cite{MR4562886} which introduces single and double integrals using Hausdorff measure and conducts numerical comparison of the integrands with the ground-truth value. 

Apart from the integrations mentioned above, the Young integration \cite{MR1555421} is another generalization of the Riemann integration on $1$-dimensional smooth compact manifolds. 
The Young integral is defined as (the limit of) the Riemann sum of the point-wise difference of functions on subdivisions of a given interval $I$.
As a corollary of the convergence of the limit, 
the Young integration on $I$  induces a bilinear functional from the product space $C^{\alpha}(I) \times C^{\beta}(I)$ of the H\"{o}lder continuous classes for $0 < \alpha, \beta \leq 1$ with $\alpha + \beta > 1$ to the complex numbers 
\[
Y \colon 
C^{\alpha}(I) \times C^{\beta}(I) \to \mathbb{C}. 
\]
The Young integration on an interval can be also extended on a circle.

The definition of the Young integration would fit a class of fractal sets such as self-similar sets defined by iterated function systems (IFSs) built on a closed interval (resp. a circle) since IFSs generate the union of contracted intervals (resp. circles).    
Indeed, there is a study \cite{phdmaruyama} that 
defines a cyclic cocycle based on the Young integration on 
the union of infinitely many closed curves or circles
by 
introducing a new class of self-similar sets; note that the class does not contain self-similar sets in intervals such as the middle-third Cantor set.   
In a talk \cite{talk:Moriyoshi2013}, a variant of the Young integration was defined on a new class of algebra of functions, 
that is the pullback of the continuous and bounded variation class on a unit circle by 
the Cantor function restricted on the middle-third Cantor set. 
There is also a study \cite{MR4688447} which generalizes the result of \cite{talk:Moriyoshi2013} to ``two dimension''.   
While the literature \cite{MR4688447,talk:Moriyoshi2013} defined a Young-type integration on non-differentiable spaces, they assume to use structures of smooth manifolds by connecting the disconnected components of self-similar sets with the Cantor function.  
In other words, the integration is not the one natively defined on self-similar sets.  

In the present paper, we define and study a Young-type integration on a class of self-similar sets in an interval $I$. 
The class includes intervals and the middle-third Cantor set as examples. 
As opposed to the literature \cite{MR4688447,talk:Moriyoshi2013}, the definition of our integration solely relies on the information of IFS. 

Let $(I,S,\{ f_{s} \})$ be IFS on $I$ and
$K$ the self-similar set of $(I,S,\{ f_{s} \})$. 
We define a variant of the Riemann sum $\phi_{n}(f,g)$ on $\cup_{s_{1},\dots , s_{n}} (f_{s_{1}} \circ \cdots \circ f_{s_{n}})(I)$ for any $n \in \mathbb{N}$ and functions $f,g$ on $K$. 
Taking the limit $\phi(f,g) = \lim_{n \to \infty}\phi_{n}(f,g)$ as $n \to \infty$, and we define a Young-type integration on $K$; see for the details in Section \ref{sec:young}. 
We show the integrability condition of the H\"{o}lder continuous functions in Section \ref{sec:convergence}, 
in which we also give various examples of integrable pairs and  their corresponding integral, and non-integrable pairs in Section \ref{subsec:2k+1-adic} and \ref{subsec:integrability}.  
In Section \ref{subsec:2k+1-adic}, we need generalizations of the Cantor function; the definition and properties of the generalizations 
are explained in Appendix \ref{sec:generalized-Cantor-function}. 
We further investigate properties of our integration:  
integration by substitution (Section \ref{sec:substitution}), 
integration by parts (Section \ref{sec:parts}) and 
term-by-term integration (Section \ref{sec:term-by-term}). 
Here, we outline our integration by substitution  
since the property reveals an explicit connection of our integration to the variant of the Young integration studied in \cite{talk:Moriyoshi2013}. 
Let $\mathcal{L}^{(i)} = \left(I^{(i)}, S, \{ f^{(i)}_{s} \} \right)$ be IFS on an interval $I^{(i)}$ 
and $K^{(i)}$ the associated self-similar set of $\mathcal{L}^{(i)}$ for $i = 1, 2$. 
If a permutation of $S$ with a condition induces a map $T \colon K^{(1)} \to K^{(2)}$, 
we can prove integration by substitution 
$\phi^{(1)}(f \circ T, g \circ T) = \pm \phi^{(2)}(f,g)$, where the sign $\pm$ depends only on the permutation but does not coincide with the signature. 
Note that the condition of permutations can not be removed; see Section \ref{sec:not_hold_substitution}. 
In particular, if $K^{(1)}$ equals the middle-third Cantor set and $K^{(2)}$ equals the unit interval $[0,1]$, 
the idea using 
the pullback of functions on an interval by the Cantor function introduced in \cite{talk:Moriyoshi2013} can be interpreted as an example of integration by substitution of our integration when the permutation equals identity; see Section \ref{subsec:MN-substitution}. 
In Section \ref{subsec:MN-substitution}, we also explain the concrete idea introduced in the talk \cite{talk:Moriyoshi2013} for the sake of readability.

We note that in order to generalize our results to higher dimension, 
we may use the Fredholm module introduced in \cite{MR4651742} and need more general methods. 
We will leave the study for future work.

\section{Preliminaries}
\label{sec:prelim} 

We firstly recall the Fredholm module 
on an interval introduced by A. Connes \cite{MR1303779}.  
Let $I = [a,b]$ (resp. $I = (a,b)$) 
be the bounded closed (resp. open) interval, 
$C(\{ a,b \})$ the $C^{\ast}$-algebra of the continuous functions 
equipped with sup norm $\| \cdot \|$ 
on the boundary $\partial I = \{ a,b \}$, 
and $\ell^{2}(\{ a,b \})$ the $\ell^{2}$-space on $\partial I = \{ a,b \}$. 
Then $C(\{ a,b \})$ acts on $\ell^{2}(\{ a,b \})$ 
by the pointwise multiplication. 
An identification  
$\ell^{2}(\{ a,b \}) = \ell^{2}(a) \oplus \ell^{2}(b) 
\cong \mathbb{C}^{2}$ gives rise to the action of   
the matrix $F = \begin{bmatrix} 0 & 1 \\ 1 & 0 \end{bmatrix}$  on $\ell^{2}(\{ a,b \})$. 
Similarly, 
all $2 \times 2$ matrices acts on $\ell^{2}(\{ a,b \})$. 
Under the identification, 
we consider $\mathcal{B}(\ell^{2}(\{ a  , b  \})) = M_{2}(\mathbb{C})$. 
Then the triple $(C(\{ a,b \}) , \ell^{2}(\{ a , b \}), F)$ 
is a Fredholm module, and we call the triple the Fredholm module on $I$.

Next we recall the definition of self-similar sets in the interval $I = [a,b]$. 
Let $N$ be a positive integer and set $S = \{ 0,1, \dots , N-1 \}$. 
Let 
$f_{s} : I \to I$ 
($s \in S$) 
be similitudes with the similarity ratio  
\[
r_{s} = \dfrac{|f_{s}(x) - f_{s}(y)| }{| x - y |} \;\,
(<1)
\quad (x \neq y) . 
\]   
We assume $r_{s} > 0$ for all $s \in S$. 
An iterated function system (IFS)
$(I, S , \{ f_{s} \}_{s \in S})$ 
induces the unique non-empty compact set 
$K$  
called the self-similar set 
such that 
$K = \bigcup_{s=0}^{N-1} f_{s}(K)$. 
We denote by $\dim_{S}(K)$ the similarity dimension of $K$, 
that is,  
the unique solution $d$ of the equation 
\[ 
\sum_{s=0}^{N-1}r_{s}^{d} = 1. 
\]  

We here introduce some notation used throughout this paper. 
Take a tuple $\bs{s} = (s_{1}, \dots ,  s_{n}) \in S^{\infty} = \bigcup_{n=0}^{\infty} S^{\times n}$, 
where we assume that $S^{\times 0} = \{ \emptyset \}$. 
We define $f_{\bs{s}} = f_{s_{1}} \circ \dots \circ f_{s_{n}}$ 
and 
$f_{\emptyset} = \mathrm{id}$. 
We set $I_{\bs{s}} = f_{\bs{s}}(I)$, 
and it is immediate to show that $I_{(\bs{s} , t)}$ ($t \in S$) is a subset in $I_{\bs{s}}$.  
Then we have 
$K = \cap_{n = 1}^{\infty} \cup_{\bs{s} \in S^{\times n}}  I_{\bs{s}}$.

Let us denote 
$a_{\bs{s}} = f_{\bs{s}}(a)$ and 
$b_{\bs{s}} = f_{\bs{s}}(b)$ 
for any $\bs{s} \in S^{\infty}$. 
Then we have 
$I_{\bs{s}} = \left[ a_{\bs{s}} , b_{\bs{s}} \right]$. 
In this paper, we allow ``few 
overlaps'' between any pair of $I_{s} = f_{s}(I)$ ($s = 0,1,\dots , N-1$), 
that is, 
we assume the inequality $\sharp \left( I_{s_{1}} \cap I_{s_{2}} \right) \leq 1$ 
if $s_{1} \neq s_{2}$. 
For the simplicity, we assume that the images $f_{s}(I)$ are ordered from left to right with respect to the indices $s$, that is, 
\[
a = a_{0} < b_{0} \leq a_{1} < b_{1} \leq \cdots \leq a_{N-1} < b_{N-1} = b.  
\]  
Under this assumption, we get   
\[
a_{\bs{s}} = a_{(\bs{s},0)} < b_{(\bs{s},0)} \leq a_{(\bs{s},1 )} < b_{(\bs{s} ,1)} \leq \cdots \leq a_{( \bs{s} ,N^{n}-1)} < b_{( \bs{s} , N^{n}-1 )} = b_{\bs{s}} 
\] 
for any $\bs{s} \in S^{\infty}$.

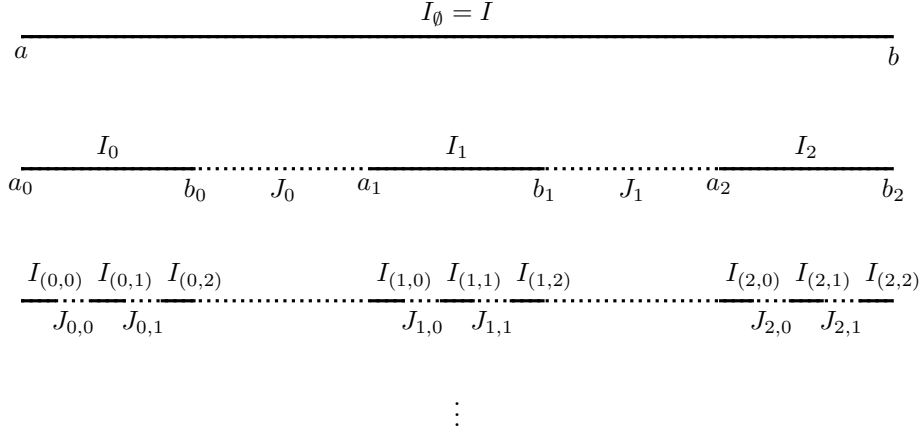
\begin{figure}[h]
\centering 
\pgfdeclarelindenmayersystem{cantor set}{
  \rule{F -> FfFfF}
  \rule{f -> fffff} 
}
\pgfdeclarelindenmayersystem{open cantor}{
  \rule{F -> FFFFF}
  \rule{f -> fFfFf} 
}
\begin{tikzpicture}[very thick]  
\def \length{330pt};   
\def \shift{50pt};  
\foreach \order in {0,...,2}
\draw[dotted,yshift=-\order*\shift]  l-system[l-system={open cantor, axiom=F, order=\order, step=\length/(5^\order)}]  ; 
\draw (3*\length/10,-\shift) node[below]{$J_{0}$}; 
\draw (7*\length/10,-\shift) node[below]{$J_{1}$}; 
\draw (3*\length/50,-2*\shift) node[below]{$J_{0,0}$}; 
\draw (7*\length/50,-2*\shift) node[below]{$J_{0,1}$}; 
\draw (23*\length/50,-2*\shift) node[below]{$J_{1,0}$}; 
\draw (27*\length/50,-2*\shift) node[below]{$J_{1,1}$}; 
\draw (43*\length/50,-2*\shift) node[below]{$J_{2,0}$}; 
\draw (47*\length/50,-2*\shift) node[below]{$J_{2,1}$}; 
\foreach \order in {0,...,2}
  \draw[yshift=-\order*\shift]  
  l-system[l-system={cantor set, axiom=F, order=\order, step=\length/(5^\order)}]  ; 
\draw (0,0) node[below]{$a$}; 
\draw (\length,0) node[below]{$b$}; 
\draw (\length/2,0) node[above]{$I_{\emptyset} = I$}; 
\draw (0,-\shift) node[below]{$a_{0}$}; 
\draw (\length/5,-\shift) node[below]{$b_{0}$}; 
\draw (2*\length/5,-\shift) node[below]{$a_{1}$}; 
\draw (3*\length/5,-\shift) node[below]{$b_{1}$}; 
\draw (4*\length/5,-\shift) node[below]{$a_{2}$};  
\draw (\length,-\shift) node[below]{$b_{2}$}; 
\draw (\length/10,-\shift) node[above]{$I_{0}$}; 
\draw (5*\length/10,-\shift) node[above]{$I_{1}$}; 
\draw (9*\length/10,-\shift) node[above]{$I_{2}$}; 
\draw (1*\length/5^2,-2*\shift) node[above]{$I_{(0,0)}$}; 
\draw (3*\length/5^2,-2*\shift) node[above]{$I_{(0,1)}$};  
\draw (5*\length/5^2,-2*\shift) node[above]{$I_{(0,2)}$}; 
\draw (11*\length/5^2,-2*\shift) node[above]{$I_{(1,0)}$}; 
\draw (13*\length/5^2,-2*\shift) node[above]{$I_{(1,1)}$};  
\draw (15*\length/5^2,-2*\shift) node[above]{$I_{(1,2)}$}; 
\draw (21*\length/5^2,-2*\shift) node[above]{$I_{(2,0)}$}; 
\draw (23*\length/5^2,-2*\shift) node[above]{$I_{(2,1)}$};  
\draw (25*\length/5^2,-2*\shift) node[above]{$I_{(2,2)}$};  
\node (D) at (\length/2,{-3*\shift + 10pt}) {$\vdots$}; 
\end{tikzpicture}  
\caption{Labels of intervals}
\end{figure} 

Let $J_{\bs{s},l}$ ($\bs{s} \in S^{\infty}$, $l = 0,1,\dots , N-2$) 
be open intervals of 
the ``holes'' of $I_{\bs{s}} = f_{\bs{s}}(I)$ 
labeled by the ordering from left to right, 
that is,  
we define 
\begin{equation*}
J_{\bs{s},0} = \left( b_{(\bs{s},0)} , a_{(\bs{s},1)} \right), \     
J_{\bs{s},1} = \left( b_{(\bs{s},1)} , a_{(\bs{s},2)} \right),  \ 
\cdots, \ 
J_{\bs{s},N-2} = \left( b_{(\bs{s},N-2)} , a_{(\bs{s},N-1)} \right). 
\end{equation*} 
We also use simpler notation $J_{l} = J_{\bs{s}, l}$ 
if there is no confusion. 
Then  we have 
$K = I \setminus \left( \cup_{\bs{s} \in S^{\infty}} \cup_{l=0}^{N-2} J_{\bs{s},l}\right)$.

A typical example of  IFSs in the interval is the middle-third Cantor set. Its construction is as follows:  
Let $I = [0,1]$ and define an IFS 
$\{ f_{0} , f_{1} \colon I \to I \}$  by  
\[
f_{0}(x) = \dfrac{1}{3}x, \quad
f_{1}(x) = \dfrac{1}{3}x + \dfrac{2}{3}.  
\]
The middle-third Cantor set $CS$ is defined as the unique 
non-empty compact set of $I$ induced by this IFS and satisfies 
$\displaystyle CS =  f_{0}(CS) \cup f_{1}(CS)$. 
The followings are some examples of the notations for $\displaystyle CS$ introduced above:  
\begin{align*} 
I_{0} &= f_{0}(I) = \left[ 0, \dfrac{1}{3}\right], & 
I_{1} &= f_{1}(I) = \left[ \dfrac{2}{3} , 1 \right], & 
J_{0} &= \left( \dfrac{1}{3} , \dfrac{2}{3} \right), \\ 
I_{(0,0)} &= f_{(0,0)}(I) = \left[ 0, \dfrac{1}{9} \right], &
I_{(0,1)} &= f_{(0,1)}(I) = \left[ \dfrac{2}{9}, \dfrac{3}{9} \right], & 
J_{0,0} &= \left( \dfrac{1}{9} , \dfrac{2}{9} \right), \\ 
I_{(1,0)} &= f_{(1,0)}(I) = \left[ \dfrac{6}{9} , \dfrac{7}{9} \right], &
I_{(1,1)} &= f_{(1,1)}(I) = \left[ \dfrac{8}{9} , 1 \right], & 
J_{1,0} &= \left( \dfrac{7}{9} , \dfrac{8}{9} \right) . 
\end{align*} 
With the notation introduced above, we have 
\[
CS 
= \bigcap_{n=1}^{\infty} \bigcup_{\bs{s} \in \{ 0,1 \}^{\times n}} I_{\bs{s}}
= I \setminus \left( \bigcup_{\bs{s} \in \{ 0,1 \}^{\infty}}  J_{\bs{s},0} \right). 
\] 

\section{Definition of a Young-type integration}
\label{sec:young} 

In this section, we define a Young-type integration 
on $K$ 
and prove a vanishing result for H\"{o}lder continuous functions with a sufficiently large H\"{o}lder exponent. Let  $(C(\{ a  , b  \}) , \ell^{2}(\{ a  , b  \}) , F_{I} )$ be the Fredholm module on $I  = \left[ a  , b  \right]$ (or $I = (a,b)$). 
We define 
\[
M_{I} = -\dfrac{1}{  b-a}[F_{I} , x] = 
\begin{bmatrix} 0 & -1 \\ 1 & 0 \end{bmatrix}. 
\] 
For the simplicity, we omit the subscript $I$ 
if there is no confusion. 

\begin{lemma} 
\label{lem:trace_riemannsum}
For any continuous function $f,g \in C( \{ a,b \} )$, we have 
\[
\mathrm{Tr}(f[F, g]M) = (f(a) + f(b))(g(b) - g(a)). 
\]
\end{lemma}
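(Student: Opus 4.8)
The plan is to reduce the entire statement to an explicit computation with $2 \times 2$ matrices via the identification $\ell^{2}(\{a,b\}) \cong \mathbb{C}^{2}$ introduced in Section~\ref{sec:prelim}. Under this identification the first coordinate records the value at $a$ and the second the value at $b$, so that a continuous function $h \in C(\{a,b\})$ acting by pointwise multiplication is realized as the diagonal matrix $\begin{bmatrix} h(a) & 0 \\ 0 & h(b) \end{bmatrix}$. In particular $f$ and $g$ become diagonal matrices with entries $(f(a),f(b))$ and $(g(a),g(b))$ respectively, while $F = \begin{bmatrix} 0 & 1 \\ 1 & 0 \end{bmatrix}$ and $M = \begin{bmatrix} 0 & -1 \\ 1 & 0 \end{bmatrix}$ are the constant matrices fixed above.

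First I would compute the commutator
\[
[F,g] = Fg - gF = \begin{bmatrix} 0 & g(b) - g(a) \\ g(a) - g(b) & 0 \end{bmatrix},
\]
which is anti-diagonal because conjugating the diagonal matrix $g$ by the swap $F$ interchanges its two entries. Next I would left-multiply by $f$, which only rescales the two rows and produces the anti-diagonal matrix with off-diagonal entries $f(a)(g(b)-g(a))$ and $f(b)(g(a)-g(b))$. Finally, right-multiplying by $M$ turns this anti-diagonal matrix into a diagonal one, whose two diagonal entries I read off as $f(a)(g(b)-g(a))$ and $f(b)(g(b)-g(a))$.

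Summing these diagonal entries then gives
\[
\mathrm{Tr}(f[F,g]M) = f(a)(g(b)-g(a)) + f(b)(g(b)-g(a)) = (f(a)+f(b))(g(b)-g(a)),
\]
as claimed. There is no genuine obstacle here; the computation is wholly mechanical. The only point requiring care is the bookkeeping of signs, specifically the asymmetry between the two off-diagonal entries of $M$: had one used $F$ in place of $M$, the two surviving diagonal entries would carry opposite signs and yield the difference $f(a)-f(b)$, whereas the sign pattern of $M$ is precisely what converts this into the sum $f(a)+f(b)$. This feature of $M$ is the reason it, rather than $F$, appears in the definition of the Riemann sum to come.
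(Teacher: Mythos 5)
Your proof is correct and is essentially the same as the paper's: both reduce to the explicit $2\times 2$ matrix computation of $[F,g]$, followed by multiplication by $f$ and $M$ and taking the trace. The paper merely compresses the final steps by observing that $[F,g]M = (g(b)-g(a))\cdot\mathrm{id}$, so the trace becomes $(g(b)-g(a))\,\mathrm{Tr}(f)$; your step-by-step bookkeeping (and your remark on why $M$ rather than $F$ yields the sum $f(a)+f(b)$) arrives at the identical result.
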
 

\begin{proof}
Since we have 
\[
[F,g] = (g(b) - g(a))\begin{bmatrix} 0 & 1 \\ -1 & 0 \end{bmatrix},   
\]
we obtain the lemma.  
\end{proof} 
Lemma \ref{lem:trace_riemannsum} indicates that 
the trace of an operator 
$f[F, g]M$ gives 
rise to a discretized version of the Young integration on intervals.  
See also Appendix \ref{app:Young} for the definition of the Young integration on an interval. 

From now on, we denote 
\[
\mathrm{Tr}_{I}(f[F,g]M) 
= \mathrm{Tr}(f[F_{I} , g]M_{I})   
\] 
for an bounded interval $I$ and two functions $f,g$ on $\partial I$. 
We define the integrability of a Young-type integration 
on $K$. 

\begin{definition} 
\label{def:integrable} 
For any functions $f$ and $g$ on $K$ and 
$n \in \mathbb{N}$, we define 
\[
\phi_{n}(f,g) 
= \sum_{\bs{s} \in S^{\times n}} \mathrm{Tr}_{I_{\bs{s}}}(f[F,g]M).
\] 
We call a pair $(f,g)$ is integrable  on $K$ 
if the sequence $\{ \phi_{n}(f,g) \}$ is convergent. 
We denote the limit by $\phi(f,g)$ and call it the integral of $(f,g)$ on $K$. 
\end{definition}

We first show a class of functions such that the integral vanishes.  
Denote by $C^{\alpha}(K)$ the $\alpha$-H\"{o}lder continuous functions on $K$ and set 
\[ 
|f|_{\alpha} = \sup_{x \neq y} \dfrac{|f(x) - f(y)|}{|x-y|^{\alpha}} \quad (f \in C^{\alpha}(K)).
\]

\begin{theorem} 
\label{thm:vanish}  
For any functions $f \in C(K)$ and $g \in C^{\beta}(K)$ with $\beta > \dim_{S}(K)$, 
we have $\phi (f,g) = 0$.  
\end{theorem}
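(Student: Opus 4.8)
The plan is to apply Lemma~\ref{lem:trace_riemannsum} to each interval appearing in the Riemann sum and then bound the resulting expression by a geometric sequence whose ratio is strictly less than $1$. Applying Lemma~\ref{lem:trace_riemannsum} to $I_{\bs{s}} = [a_{\bs{s}} , b_{\bs{s}}]$ makes the sum defining $\phi_{n}$ completely explicit:
\[
\phi_{n}(f,g) = \sum_{\bs{s} \in S^{\times n}} \bigl( f(a_{\bs{s}}) + f(b_{\bs{s}}) \bigr) \bigl( g(b_{\bs{s}}) - g(a_{\bs{s}}) \bigr).
\]
So the first step is simply to record this identity, which reduces the whole statement to an estimate on a finite sum indexed by $S^{\times n}$.

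The second step is to estimate each summand. Since $K$ is compact and $f$ is continuous, $f$ is bounded; write $M = \sup_{x \in K} |f(x)|$. Because $g \in C^{\beta}(K)$, we have $|g(b_{\bs{s}}) - g(a_{\bs{s}})| \leq |g|_{\beta} \, |b_{\bs{s}} - a_{\bs{s}}|^{\beta}$. The composed map $f_{\bs{s}} = f_{s_{1}} \circ \cdots \circ f_{s_{n}}$ is again a similitude, now with ratio $r_{\bs{s}} := r_{s_{1}} \cdots r_{s_{n}}$, so the interval length is $|b_{\bs{s}} - a_{\bs{s}}| = r_{\bs{s}}(b-a)$. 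Combining these two bounds, using $|f(a_{\bs{s}}) + f(b_{\bs{s}})| \leq 2M$, and factoring the product of ratios over the coordinates of $\bs{s}$, I obtain
\[
|\phi_{n}(f,g)| \leq 2M |g|_{\beta} (b-a)^{\beta} \sum_{\bs{s} \in S^{\times n}} r_{\bs{s}}^{\beta} = 2M |g|_{\beta} (b-a)^{\beta} \Bigl( \sum_{s=0}^{N-1} r_{s}^{\beta} \Bigr)^{n}.
\]

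The decisive step is to show that the base of this $n$-th power is strictly less than $1$. The function $\beta \mapsto \sum_{s=0}^{N-1} r_{s}^{\beta}$ is strictly decreasing, since each $r_{s} \in (0,1)$, and by the definition of the similarity dimension it equals $1$ at $\beta = \dim_{S}(K)$. Hence for $\beta > \dim_{S}(K)$ we have $\rho := \sum_{s} r_{s}^{\beta} < 1$, so that $|\phi_{n}(f,g)| \leq C \rho^{n} \to 0$ as $n \to \infty$, with $C = 2M |g|_{\beta}(b-a)^{\beta}$. This simultaneously proves that $(f,g)$ is integrable and that $\phi(f,g) = 0$. I expect the only genuine point requiring care is this last monotonicity observation linking the hypothesis $\beta > \dim_{S}(K)$ to a geometric ratio below $1$; once that is in place, the remaining estimate is routine.
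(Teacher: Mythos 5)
Your proof is correct and follows essentially the same route as the paper's: expand $\phi_{n}$ via Lemma~\ref{lem:trace_riemannsum}, bound each term using $\|f\|$ and the H\"{o}lder seminorm $|g|_{\beta}$, factor the sum into $\left( \sum_{s=0}^{N-1} r_{s}^{\beta} \right)^{n}$, and conclude by geometric decay. The only difference is that you spell out the monotonicity of $\beta \mapsto \sum_{s} r_{s}^{\beta}$ and its value $1$ at $\beta = \dim_{S}(K)$, a point the paper's proof uses implicitly when asserting the limit is $0$ --- a welcome clarification, not a different argument.
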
 

\begin{proof}   
If $\beta > \dim_{S}(K)$, we have 
\begin{align*} 
|\phi_{n}(f,g)| 
&\leq \sum_{\bs{s} \in S^{\times n}} |\mathrm{Tr}_{f_{\bs{s}}(I)}(f[F,g]M) | 
\leq \sum_{\bs{s} \in S^{\times n}} |f(a_{\bs{s}}) + f(b_{\bs{s}})| |g(b_{\bs{s}}) - g(a_{\bs{s}})| \\ 
&\leq 2\| f \| |g|_{\beta} \sum_{\bs{s} \in S^{\times n}} |b_{\bs{s}} - a_{\bs{s}}|^{\beta} 
= 2(b-a)^{\beta}\| f \| |g|_{\beta} \sum_{\bs{s} \in S^{\times n}} \prod_{i=1}^{n} r_{s_{i}}^{\beta} \\ 
&= 2(b-a)^{\beta}\| f \| |g|_{\beta} \left( \sum_{s=0}^{N-1} r_{s}^{\beta}\right)^{n} 
\to 0 \quad (n \to \infty). 
\end{align*}  
\end{proof}

We note that 
our integration on intervals 
does not coincide 
with either of the Young integration or the Lebesgue integration. 

\begin{example} 
\label{exm:interval}
We first consider the correspondence of our integration to the Young integration; see Appendix \ref{app:Young} for the Young integration.  
Let $I = [a,b]$ be the closed interval and we set 
\[
f_{0}(x) = \dfrac{1}{2}(x-a) + a  \ \text{and} \  
f_{1}(x) = \dfrac{1}{2}\left( x - b \right) +  b 
\quad 
(x \in I). 
\] 
Then a tuple $(I, \{ 0,1 \}, \{ f_{0}, f_{1} \})$  
defines an IFS and the self-similar set of the IFS equals $I$. 
Take two functions $f,g$ on $I$ such that the Stieltjes integral $Y(f,g)$ 
exists in Riemann sense. 
By the definition of our integral, 
the pair $(f,g)$ is integrable  
and we have 
\[
\phi (f,g) = 2\int_{a}^{b}f(x) dg(x). 
\]
On the other hand, let $f$ be the Dirichlet function on $I$. 
Then a pair $(f,x)$ is integrable and its integral satisfies $\phi(f,x) = 2$, 
but the Stieljes integral $Y(f,x)$ does not exist.  
Therefore, our integral is different from the both of the Young integration and Lebesgue integration. 
\end{example}

\section{Convergence of integration}
\label{sec:convergence}

In this section, we prove a sufficient condition  for the existence of the limit of $\phi_{n}$ in Theorem \ref{thm:iff_condition}. 
We also calculate examples of our integration.  

\subsection{Sufficient condition for the integrability} 
In order to prove a sufficient condition for the integrability, 
we firstly prove an easy lemma.  
Set $\Delta_{I} f = f(b) - f(a)$ for any function $f$ on $\partial I = \{ a,b \}$. 

\begin{lemma} 
\label{lem:2bunkatsu}
Let $I =[a,b]$, $x \in (a,b)$, $A = [a,x]$ and $B = [x,b]$. 
\begin{enumerate} 
\item 
For any function $g$ on $\{ a,b,x \}$, we have 
$[F_{I}, g] - [F_{A} , g] = [F_{B}, g]$ 
in $M_{2}(\mathbb{C})$. 
\item 
For any function $f, g$ on $\{ a,b,x \}$, we have 
\[
\operatorname{Tr}_{A}(f[F,g]M) + \operatorname{Tr}_{B}(f[F,g]M) 
= \operatorname{Tr}_{I}(f[F,g]M) + \Delta_{A}f \Delta_{B}g - \Delta_{B}f \Delta_{A}g. 
\]
\end{enumerate} 
\end{lemma}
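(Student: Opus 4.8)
The plan is to reduce both claims to the explicit two-by-two matrix computations already carried out in the proof of Lemma~\ref{lem:trace_riemannsum}, so that each statement becomes an elementary scalar identity governed by the telescoping relation $\Delta_I g = \Delta_A g + \Delta_B g$.

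For part (1), I would first recall from the computation in Lemma~\ref{lem:trace_riemannsum} that, for any bounded interval $[p,q]$ and any $g$ on $\{p,q\}$, one has $[F_{[p,q]}, g] = (g(q)-g(p))\begin{bmatrix} 0 & 1 \\ -1 & 0 \end{bmatrix} = \Delta_{[p,q]} g \cdot \begin{bmatrix} 0 & 1 \\ -1 & 0 \end{bmatrix}$. The one genuinely conceptual point is the meaning of the identity ``in $M_2(\mathbb{C})$'': the three commutators a priori act on the distinct two-point spaces $\ell^2(\{a,b\})$, $\ell^2(\{a,x\})$ and $\ell^2(\{x,b\})$, so I would fix once and for all the identification of each such $\ell^2$ with $\mathbb{C}^2$ sending the left endpoint to the first basis vector and the right endpoint to the second. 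Under this identification every commutator is a scalar multiple of the same fixed antisymmetric matrix, and the claim $[F_I, g] - [F_A, g] = [F_B, g]$ collapses to the scalar identity $(g(b)-g(a)) - (g(x)-g(a)) = g(b) - g(x)$, that is $\Delta_I g - \Delta_A g = \Delta_B g$.

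For part (2), I would invoke the closed form of Lemma~\ref{lem:trace_riemannsum} directly, writing $\operatorname{Tr}_{[p,q]}(f[F,g]M) = (f(p)+f(q))\Delta_{[p,q]} g$ for each of the three intervals. Summing over $A$ and $B$ gives $(f(a)+f(x))\Delta_A g + (f(x)+f(b))\Delta_B g$, while the whole interval contributes $(f(a)+f(b))(\Delta_A g + \Delta_B g)$ after substituting $\Delta_I g = \Delta_A g + \Delta_B g$ from part (1). Subtracting and grouping the $\Delta_A g$ and $\Delta_B g$ terms, the residual is $(f(x)-f(b))\Delta_A g + (f(x)-f(a))\Delta_B g$, which is exactly $\Delta_A f\,\Delta_B g - \Delta_B f\,\Delta_A g$ once one rewrites $f(x)-f(a) = \Delta_A f$ and $f(x)-f(b) = -\Delta_B f$. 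This yields the stated formula.

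I do not expect a serious obstacle here; the content is bookkeeping. The only place to be careful is conceptual rather than computational: one must resist the temptation to treat the trace as simply additive under subdividing the interval. It is the splitting of the \emph{multiplier} $f$ (not of $g$, which telescopes cleanly) that produces the genuinely nontrivial cross terms $\Delta_A f\,\Delta_B g - \Delta_B f\,\Delta_A g$, and keeping track of the antisymmetry in $f$ versus $g$ across these two correction terms is the one spot where a sign error could creep in.
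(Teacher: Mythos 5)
Your proposal is correct; both parts check out, and your part (1) is the same direct matrix computation as the paper's (you merely package it as saying every commutator is $\Delta g$ times the fixed antisymmetric matrix, with the sensible explicit identification of each two-point $\ell^2$ with $\mathbb{C}^2$, which the paper leaves implicit). Where you genuinely diverge is part (2): the paper does \emph{not} substitute the closed formula of Lemma \ref{lem:trace_riemannsum}; instead it splits each trace with the coordinate projections $P_1,P_2$, writes e.g. $\operatorname{Tr}_A(f[F,g]M)=\operatorname{Tr}(f(a)P_1[F_A,g]M)+\operatorname{Tr}(f(x)P_2[F_A,g]M)$, uses part (1) as an operator identity $[F_I,g]=[F_A,g]+[F_B,g]$ to cancel terms, and only at the end evaluates the two surviving traces. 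You instead reduce everything at once to scalar algebra via
\begin{equation*}
\operatorname{Tr}_{[p,q]}(f[F,g]M)=(f(p)+f(q))\bigl(g(q)-g(p)\bigr),
\end{equation*}
using part (1) only through its scalar shadow $\Delta_I g=\Delta_A g+\Delta_B g$. Your route is shorter and less error-prone — indeed the paper's projection computation contains slips (a stray $f$, and a penultimate line where $g$-differences are typeset as $f$-differences, making that line literally read as zero) that pure scalar bookkeeping avoids. What the paper's route buys is that it stays at the operator level of the Fredholm-module formalism, making the role of part (1) as a genuine identity in $M_2(\mathbb{C})$ (not just its trace consequence) visible in the derivation; but for the purposes of everything downstream (Theorem \ref{thm:iff_condition} and its recursive application to $N$ subintervals), your version of part (2) is fully sufficient.
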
 

\begin{proof}  
\begin{enumerate} 
\item 
The following easy calculation 
proves this part. 
\begin{align*} 
[F_{I}, g] - [F_{A} , g] 
&= 
\begin{bmatrix} 
 & g(b) - g(a) \\ 
-(g(b) - g(a)) & 
\end{bmatrix} 
- 
\begin{bmatrix} 
 & g(x) - g(a) \\ 
-(g(x) - g(a)) & 
\end{bmatrix} \\ 
&= 
\begin{bmatrix} 
 & g(b) - g(x) \\ 
-(g(b) - g(x)) & 
\end{bmatrix} 
= 
[F_{B} , g]. 
\end{align*} 
\item  
Denote by $P_{i}$ the projection on to 
the $i$-th component of $\mathbb{C}^{2}$. 
By  (1), we have 
\begin{align*} 
&\phantom{=} 
\operatorname{Tr}_{A}(f[F,g]M) + \operatorname{Tr}_{B}(f[F,g]M) - \operatorname{Tr}_{I}(f[F,g]M) \\ 
&= 
\operatorname{Tr}(f(a)P_{1}[F_{A},g]M) + \operatorname{Tr}(f(x)P_{2}[F_{A},g]M) \\ 
&\hspace*{10mm}+ \operatorname{Tr}(f(x)P_{1}[F_{B},g]M) + \operatorname{Tr}(f(b)P_{2}[F_{B},g]M) \\ 
&\hspace*{10mm}- \operatorname{Tr}(f(a)P_{1}[F_{I},g]M) - \operatorname{Tr}(f(b)P_{2}[F_{I},g]M) f\\ 
&= 
-\operatorname{Tr}(f(a)P_{1}[F_{B},g]M) - \operatorname{Tr}(f(b)P_{2}[F_{A},g]M) \\ 
&\hspace*{10mm}+ \operatorname{Tr}(f(x)P_{2}[F_{A},g]M) + \operatorname{Tr}(f(x)P_{1}[F_{B},g]M) \\ 
&= 
(f(x) - f(a))\operatorname{Tr}(P_{1}[F_{B},g]M)
-(f(b) - f(x))\operatorname{Tr}(P_{2}[F_{A},g]M) \\ 
&= 
(f(x) - f(a))(f(b) - f(x)) 
- (f(b) - f(x))(f(x) - f(a)) \\ 
&= 
\Delta_{A}f \Delta_{B}g - \Delta_{B} f \Delta_{A} g. 
\end{align*} 
This proves our lemma. 
\end{enumerate} 
\end{proof} 

By recursively applying  Lemma \ref{lem:2bunkatsu} (2), 
we can generalize Lemma \ref{lem:2bunkatsu} (2) 
for any number of pieces of divisions of $I$. 
For example, let $A = [a,x]$, $B = [x,y]$ and $C = [y,b]$. 
Then we have 
\begin{align*} 
&\phantom{=} 
\operatorname{Tr}_{A}(f[F,g]M) + \operatorname{Tr}_{B}(f[F,g]M) + \operatorname{Tr}_{C}(f[F,g]M) \\ 
&= 
\operatorname{Tr}_{A \cup B}(f[F,g]M) + \operatorname{Tr}_{C}(f[F,g]M) + \Delta_{A}g \Delta_{b} g - \Delta_{B}f \Delta_{A} g \\ 
&= 
\operatorname{Tr}_{A \cup B \cup C}(f[F,g]M) 
+ \Delta_{A} f \Delta_{B} g
+ \Delta_{A \cup B}f \Delta_{C} g 
- \Delta_{B} f \Delta_{A} g 
- \Delta_{C}f \Delta_{A \cup B}g. 
\end{align*} 

For the proof of a sufficient condition for the integrability, we define another sequence on $\cup_{\bs{s} \in S^{\times n}} \cup_{l=0}^{N-2} J_{\bs{s},l}$: 
\begin{align*}
\psi_{n+1}(f,g) 
&= \sum_{\bs{s} \in S^{\times n}}\sum_{l=0}^{N - 2} \ \mathrm{Tr}_{J_{\bs{s},l}}(f[F,g]M) . 
\end{align*}

\begin{theorem}
\label{thm:iff_condition} 
Let $f \in C^{\alpha}(K)$ and $g \in C^{\beta}(K)$ with 
$\alpha + \beta > \dim_{S}(K)$. 
The pair $(f,g)$ is integrable on $K$
if and only if 
the series $\sum_{n=1}^{\infty} \psi_{n}(f,g)$ converges. 
\end{theorem}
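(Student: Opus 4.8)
The plan is to derive a single recursion between consecutive levels and then telescope it. First I would promote Lemma~\ref{lem:2bunkatsu}(2) to its $m$-piece form: if an interval $I$ is cut into consecutive subintervals $Q_{1},\dots,Q_{m}$ from left to right, then
\[
\sum_{j=1}^{m}\operatorname{Tr}_{Q_{j}}(f[F,g]M)
= \operatorname{Tr}_{I}(f[F,g]M)
+ \sum_{1\le j<k\le m}\bigl(\Delta_{Q_{j}}f\,\Delta_{Q_{k}}g - \Delta_{Q_{k}}f\,\Delta_{Q_{j}}g\bigr),
\]
which follows by induction from Lemma~\ref{lem:2bunkatsu}(2) exactly as in the three-piece computation displayed above, using $\Delta_{Q\cup Q'}h=\Delta_{Q}h+\Delta_{Q'}h$ to expand the mixed terms.

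Next I would apply this identity to each $I_{\bs{s}}$ with $\bs{s}\in S^{\times n}$, decomposed left-to-right into its level-$(n+1)$ solid pieces and holes $I_{(\bs{s},0)},J_{\bs{s},0},I_{(\bs{s},1)},\dots,I_{(\bs{s},N-1)}$. Summing over $\bs{s}\in S^{\times n}$ and recognizing $\sum_{\bs{s}}\sum_{t}\operatorname{Tr}_{I_{(\bs{s},t)}}=\phi_{n+1}$ and $\sum_{\bs{s}}\sum_{l}\operatorname{Tr}_{J_{\bs{s},l}}=\psi_{n+1}$, I obtain the recursion
\[
\phi_{n+1}(f,g)+\psi_{n+1}(f,g)=\phi_{n}(f,g)+R_{n},\qquad n\ge 0,
\]
where $R_{n}$ collects the correction terms of all the $I_{\bs{s}}$ at level $n$ (the case $n=0$ being the decomposition of $I_{\emptyset}=I$).

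The decisive estimate is the bound on $R_{n}$. Since $a$ and $b$ are the fixed points of $f_{0}$ and $f_{N-1}$, both lie in $K$, hence every endpoint $a_{\bs{s}},b_{\bs{s}}$ and every hole endpoint lies in $K$ and the H\"{o}lder seminorms apply there. Each correction term for $I_{\bs{s}}$ is a product $\Delta_{Q}f\,\Delta_{Q'}g$ with $Q,Q'\subseteq I_{\bs{s}}$, so $|\Delta_{Q}f\,\Delta_{Q'}g|\le |f|_{\alpha}|g|_{\beta}|I_{\bs{s}}|^{\alpha+\beta}$; as the number of such terms depends only on $N$, there is a constant $C_{N}$ with
\[
|R_{n}|\le C_{N}|f|_{\alpha}|g|_{\beta}\sum_{\bs{s}\in S^{\times n}}|I_{\bs{s}}|^{\alpha+\beta}
=C_{N}(b-a)^{\alpha+\beta}|f|_{\alpha}|g|_{\beta}\Bigl(\sum_{s}r_{s}^{\alpha+\beta}\Bigr)^{n}.
\]
The hypothesis $\alpha+\beta>\dim_{S}(K)$ together with $r_{s}<1$ gives $\sum_{s}r_{s}^{\alpha+\beta}<\sum_{s}r_{s}^{\dim_{S}(K)}=1$, so $|R_{n}|$ decays geometrically and $\sum_{n}R_{n}$ converges absolutely.

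Finally I would telescope. From the recursion,
\[
\phi_{n}(f,g)=\phi_{0}(f,g)+\sum_{k=0}^{n-1}R_{k}-\sum_{k=1}^{n}\psi_{k}(f,g),
\]
where $\phi_{0}(f,g)=\operatorname{Tr}_{I}(f[F,g]M)$ is a fixed constant and $\sum_{k}R_{k}$ converges. Hence $\{\phi_{n}(f,g)\}$ converges if and only if the partial sums $\sum_{k=1}^{n}\psi_{k}(f,g)$ converge, which is precisely the asserted equivalence; it also exhibits the limit as $\phi(f,g)=\operatorname{Tr}_{I}(f[F,g]M)+\sum_{k=0}^{\infty}R_{k}-\sum_{k=1}^{\infty}\psi_{k}(f,g)$. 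I expect the main obstacle to be the careful bookkeeping in the $m$-piece identity together with confirming that all endpoints involved lie in $K$, so that the H\"{o}lder bound on $R_{n}$ is legitimate; once the geometric bound $\sum_{s}r_{s}^{\alpha+\beta}<1$ is secured, the equivalence is immediate.
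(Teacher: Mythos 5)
Your proposal is correct and follows essentially the same route as the paper: both apply Lemma \ref{lem:2bunkatsu}(2) recursively to decompose each $I_{\bs{s}}$ into its level-$(n+1)$ pieces and holes, obtain the recursion $\phi_{n+1}+\psi_{n+1}=\phi_{n}+R_{n}$, bound $R_{n}$ by $|f|_{\alpha}|g|_{\beta}$ times a constant times $\left(\sum_{s}r_{s}^{\alpha+\beta}\right)^{n}$, and conclude by telescoping (the paper phrases this via the Cauchy criterion on $\phi_{n+m}-\phi_{n}$, you via absolute convergence of $\sum_{n}R_{n}$, which are equivalent). Your pairwise form $\sum_{j<k}\bigl(\Delta_{Q_{j}}f\,\Delta_{Q_{k}}g-\Delta_{Q_{k}}f\,\Delta_{Q_{j}}g\bigr)$ of the correction is just an expansion of the paper's cumulative form, and your explicit observation that all interval and hole endpoints lie in $K$ (so the H\"{o}lder seminorms apply) is a point the paper uses implicitly.
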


\begin{proof}    
In a manner similar to Section \ref{sec:prelim}, we denote $(\bs{s} , k) = (s_{1}, \dots , s_{n} , k) \in S^{\times (n+1)}$ for $\bs{s} = (s_{1}, \dots , s_{n})\in S^{\times n}$ and $k \in S$. 
For the simplicity, we will use notations $I_{k} = I_{(\bs{s} , k)}$ and $J_{l} = J_{\bs{s}, l}$ as long as it is clear what they mean. Then, 
by recursively applying Lemma \ref{lem:2bunkatsu} (2), we get 
\begin{align*} 
&\phantom{=}\sum_{k=0}^{N-1} \operatorname{Tr}_{I_{(\bs{s}, k)}}(f,[F,g]M) 
+ 
\sum_{l=0}^{N-2} \operatorname{Tr}_{J_{\bs{s}, l}}(f,[F,g]M) 
- 
\operatorname{Tr}_{I_{\bs{s}}}(f,[F,g]M) \\ 
&= 
\left( \Delta_{I_{0}}f \Delta_{J_{0}}g + \Delta_{I_{0} \cup J_{0}} f \Delta_{I_{1}}g + \cdots + \Delta_{I_{0} \cup \dots \cup J_{N-2}}f \Delta_{I_{N-1}}g \right) \\ 
&\hspace*{10mm}- 
\left( \Delta_{J_{0}}f\Delta_{I_{0}}g + \Delta_{I_{0}}f \Delta_{I_{0} \cup J_{0}}g + \cdots + \Delta_{I_{N-1}}f \Delta_{I_{0} \cup \cdots \cup J_{N-2}}g \right). 
\end{align*}

\noindent 
With this expression, we have 
\begin{align*} 
&\phantom{=}\phi_{n+1}(f,g) - \phi_{n}(f,g)  \\ 
&= 
\sum_{\bs{s} \in S^{\times n}} \sum_{k=0}^{N-1} \operatorname{Tr}_{I_{(\bs{s}, k)}}(f,[F,g]M) 
- 
\sum_{\bs{s} \in S^{\times n}} 
\operatorname{Tr}_{I_{\bs{s}}}(f,[F,g]M)  \\ 
&= 
\sum_{\bs{s} \in S^{\times n}} 
\left( \Delta_{I_{0}}f \Delta_{J_{0}}g + \Delta_{I_{0} \cup J_{0}} f \Delta_{I_{1}}g + \cdots + \Delta_{I_{0} \cup \dots \cup J_{N-2}}f \Delta_{I_{N-1}}g \right) \\ 
&\hspace*{10mm}- 
\sum_{\bs{s} \in S^{\times n}} 
\left( \Delta_{J_{0}}f\Delta_{I_{0}}g + \Delta_{I_{0}}f \Delta_{I_{0} \cup J_{0}}g + \cdots + \Delta_{I_{N-1}}f \Delta_{I_{0} \cup \cdots \cup J_{N-2}}g \right) \\ 
&\hspace*{10mm}-
\psi_{n+1}(f,g). 
\end{align*} 

\noindent 
Then we have 
\begin{align*} 
&\phantom{=}\phi_{n+m}(f,g) - \phi_{n}(f,g) \\ 
&= 
\sum_{r=0}^{m-1}\left( \phi_{n+r+1}(f,g) - \phi_{n+r}(f,g)  \right) \\ 
&= 
\sum_{r=0}^{m-1}
\sum_{\bs{s} \in S^{\times (n+r)}} 
\left( \Delta_{I_{0}}f \Delta_{J_{0}}g + \Delta_{I_{0} \cup J_{0}} f \Delta_{I_{1}}g + \cdots + \Delta_{I_{0} \cup \dots \cup J_{N-2}}f \Delta_{I_{N-1}}g \right) \\ 
&\hspace*{10mm}- 
\sum_{r=0}^{m-1}
\sum_{\bs{s} \in S^{\times (n+r)}} 
\left( \Delta_{J_{0}}f\Delta_{I_{0}}g + \Delta_{I_{0}}f \Delta_{I_{0} \cup J_{0}}g + \cdots + \Delta_{I_{N-1}}f \Delta_{I_{0} \cup \cdots \cup J_{N-2}}g \right) \\ 
&\hspace*{10mm}-
\sum_{r=0}^{m-1}
\psi_{n+r}(f,g). 
\end{align*}

By the H\"{o}lder continuity of $f,g$, we have  
\begin{align*} 
&\phantom{=}
\left| \Delta_{I_{0}}f \Delta_{J_{0}}g + \Delta_{I_{0} \cup J_{0}}f \Delta_{I_{1}}g + \cdots + \Delta_{I_{0} \cup \dots \cup J_{N-2}}f \Delta_{I_{N-1}}g \right| \\ 
&\leq 
\left| \Delta_{I_{0}}f \Delta_{J_{0}}g \right| 
+ \left| \Delta_{I_{0} \cup J_{0}}f \Delta_{I_{1}}g \right| 
+ \cdots + 
\left| \Delta_{I_{0} \cup \dots \cup J_{N-2}}f \Delta_{I_{N-1}}g  \right|  \\ 
&\leq 
|f|_{\alpha}|g|_{\beta} (2N+1) (b-a) \prod_{i=1}^{N}r_{s_{i}}^{\alpha + \beta} 
\shortintertext{and} 
&\phantom{=}
\left| \Delta_{J_{0}}f\Delta_{I_{0}}g + \Delta_{I_{0}}f \Delta_{I_{0} \cup J_{0}}g + \cdots + \Delta_{I_{N-1}}f \Delta_{I_{0} \cup \cdots \cup J_{N-2}}g \right| \\ 
&\leq 
\left| \Delta_{I_{0}}f \Delta_{J_{0}}g \right| 
+ \left| \Delta_{I_{0}}f \Delta_{I_{0} \cup J_{0}}g \right| 
+ \cdots + 
\left|  \Delta_{I_{N-1}}f \Delta_{I_{0} \cup \cdots \cup J_{N-2}}g   \right| \\ 
&\leq 
|f|_{\alpha}|g|_{\beta} (2N+1) (b-a) \prod_{i=1}^{N}r_{s_{i}}^{\alpha + \beta}.    
\end{align*}

Therefore, for any $m \in \mathbb{N}$ and $\alpha + \beta > \dim_{S}(K)$, we have 
\begin{align*} 
&\phantom{=}\left| 
\sum_{r=0}^{m-1}
\sum_{\bs{s} \in S^{\times (n+r)}} 
\left( \Delta_{I_{0}}f \Delta_{J_{0}}g + \Delta_{I_{0} \cup J_{0}} f \Delta_{I_{1}}g + \cdots + \Delta_{I_{0} \cup \dots \cup J_{N-2}}f \Delta_{I_{N-1}}g \right) \right.\\ 
&\hspace*{10mm}- 
\left. 
\sum_{r=0}^{m-1}
\sum_{\bs{s} \in S^{\times (n+r)}} 
\left( \Delta_{J_{0}}f\Delta_{I_{0}}g + \Delta_{I_{0}}f \Delta_{I_{0} \cup J_{0}}g + \cdots + \Delta_{I_{N-1}}f \Delta_{I_{0} \cup \cdots \cup J_{N-2}}g \right) \right| \\ 
&\leq
2|f|_{\alpha}|g|_{\beta} (2N+1) (b-a) \sum_{r=0}^{\infty}
\sum_{\bs{s} \in S^{\times (n+r)}} \prod_{i=1}^{N}r_{s_{i}}^{\alpha + \beta}  \\ 
&=
2|f|_{\alpha}|g|_{\beta} (2N+1) (b-a) \sum_{r=0}^{\infty}
\left( \sum_{s=0}^{N-1} r_{s}^{\alpha + \beta} \right)^{n+r} \\ 
&= 2|f|_{\alpha}|g|_{\beta} (2N+1) (b-a) \left( \sum_{s=0}^{N-1} r_{s}^{\alpha + \beta} \right)^{n} \sum_{r=0}^{\infty} \left( \sum_{s=0}^{N-1} r_{s}^{\alpha + \beta} \right)^{r} \\ 
&\to 0 \quad (n \to \infty). 
\end{align*} 
Therefore, 
the sequence $\{ \phi_{n}(f,g) \}$ is Cauchy 
if and only if 
$\sum_{r=0}^{m-1}\psi_{n+r}(f,g) \to 0$ ($n \to \infty$). 
Thus we get the conclusion.

\end{proof}

Theorem \ref{thm:iff_condition} implies 
the following reasonable sufficient condition 
for the  integrability.

\begin{corollary} 
\label{cor:star-condition}
Let $f \in C^{\alpha}(K)$ and $g \in C^{\beta}(K)$ with 
$\alpha + \beta > \dim_{S}(K)$. 
Assume that there exists $N \in \mathbb{N}$ 
such that 
$g$ is constant on $\partial J_{\bs{s}, l}$
for any $\bs{s} \in S^{\times n}$ ($n \geq N$) and $l = 0,1,\dots , N-2$. 
Then a pair $(f,g)$ is integrable. 
\end{corollary}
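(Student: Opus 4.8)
The plan is to deduce integrability directly from the criterion in Theorem \ref{thm:iff_condition}, which says that $(f,g)$ is integrable precisely when $\sum_{n=1}^{\infty}\psi_{n}(f,g)$ converges; since $\alpha+\beta>\dim_{S}(K)$ is assumed, that theorem applies. So it suffices to show that this series has only finitely many nonzero terms. Recall that $\psi_{m+1}(f,g)=\sum_{\bs{s}\in S^{\times m}}\sum_{l=0}^{N-2}\mathrm{Tr}_{J_{\bs{s},l}}(f[F,g]M)$, and that each hole $J_{\bs{s},l}=(b_{(\bs{s},l)},a_{(\bs{s},l+1)})$ has endpoints of the form $b_{(\bs{s},l)}=f_{(\bs{s},l)}(b)$ and $a_{(\bs{s},l+1)}=f_{(\bs{s},l+1)}(a)$. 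Since $a$ and $b$ are the fixed points of $f_{0}$ and $f_{N-1}$ respectively, they belong to $K$, hence so do these endpoints; thus $f$ and $g$, which are only defined on $K$, may legitimately be evaluated on $\partial J_{\bs{s},l}$, exactly as the definition of $\psi$ requires.

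The key computation is to apply Lemma \ref{lem:trace_riemannsum} to each individual hole. It gives $\mathrm{Tr}_{J_{\bs{s},l}}(f[F,g]M)=\bigl(f(b_{(\bs{s},l)})+f(a_{(\bs{s},l+1)})\bigr)\,\Delta_{J_{\bs{s},l}}g$, where $\Delta_{J_{\bs{s},l}}g=g(a_{(\bs{s},l+1)})-g(b_{(\bs{s},l)})$. By hypothesis there is a threshold index $N_{0}$ (the integer denoted $N$ in the statement, which I rename to avoid collision with the cardinality of $S$) such that $g$ is constant on $\partial J_{\bs{s},l}$ whenever $\bs{s}\in S^{\times n}$ with $n\geq N_{0}$; this is exactly the assertion $\Delta_{J_{\bs{s},l}}g=0$ for all such $\bs{s}$ and all $l$. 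Consequently every summand of $\psi_{m+1}(f,g)$ with $m\geq N_{0}$ vanishes, so $\psi_{k}(f,g)=0$ for every $k\geq N_{0}+1$.

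It then follows that $\sum_{n=1}^{\infty}\psi_{n}(f,g)=\sum_{n=1}^{N_{0}}\psi_{n}(f,g)$ is a finite sum, hence trivially convergent, and Theorem \ref{thm:iff_condition} yields the integrability of $(f,g)$. There is no genuine analytic obstacle here: the hypothesis $\alpha+\beta>\dim_{S}(K)$ serves only to license the use of Theorem \ref{thm:iff_condition}, after which the argument is purely algebraic. The only points demanding care are the bookkeeping in the index shift between $\psi_{m+1}$ and the level-$m$ tuples $S^{\times m}$, and the harmless notational clash between the threshold $N$ and the number $N$ of similitudes; neither affects the substance of the proof.
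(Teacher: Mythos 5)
Your proof is correct and is exactly the argument the paper intends: the paper states this corollary without a written proof, treating it as an immediate consequence of Theorem \ref{thm:iff_condition}, since the constancy hypothesis makes every term of $\psi_{n+1}(f,g)$ vanish for $n$ past the threshold, so the series $\sum_{n}\psi_{n}(f,g)$ reduces to a finite sum. Your extra care (checking that the hole endpoints lie in $K$, and disentangling the threshold from the cardinality of $S$, both denoted $N$ in the statement) is sound and does not change the substance.
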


\subsection{Examples on the $(2k+1)$-adic Cantor set}
\label{subsec:2k+1-adic}
We take $(2k+1)$-adic Cantor set $CS_{k}$ and generalizations of the Cantor function as examples and study the integrability of some class of functions through our integration $\phi$. 
The definition and some properties of $CS_{k}$ and the generalized Cantor function  are detailed in Appendix \ref{sec:generalized-Cantor-function}. 

For $k \in \mathbb{N}$ and $0 < p < 1$, 
let $c_{k,p}$ be a generalized Cantor function.  
Then,  we can show conditions for the integrability and non-integrability of pairs $(1,c_{k,p})$ and $(c_{k,p} , c_{k,q})$ 
as well as the numerical results of their integration.

\begin{theorem} 
\label{thm:integration-c_{k,p}} 
For any $0< p, q < 1$, we have the followings: 
\begin{enumerate} 
\item 
For $0 < p \leq 1/(k+1)$, 
a pair $(1,c_{k,p})$ is integrable 
and its integral is 
\begin{equation*} 
\phi(1,c_{k,p}) 
= 
\begin{cases} 
0 & \text{for}\ 0 < p < 1/(k+1) \\ 
2 & \text{for}\ p=1/(k+1)
\end{cases}. 
\end{equation*}  
On the other hand, we have $\phi(1,c_{k,p}) = \infty$ 
for any $1/(k+1) < p < 1$. 
\item 
For $0 < q \leq 1/(k+1)$, 
a pair $(c_{k,p}, c_{k,q})$ is integrable 
and its integral is 
\begin{equation*} 
\phi(c_{k,p}, c_{k,q}) 
= 
\begin{cases} 
0 & \text{for}\ 0 < q < 1/(k+1)\\ 
1 & \text{for}\ q=1/(k+1)
\end{cases}. 
\end{equation*}  
On the other hand, we have $\phi(c_{k,p}, c_{k,q}) = \infty$ 
for any $1/(k+1) < q < 1$. 
\end{enumerate} 
\end{theorem}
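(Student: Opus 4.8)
The plan is to compute the finite sums $\phi_{n}$ in closed form for each pair and then pass to the limit; because the summands turn out to be completely explicit, I will not need the convergence criterion of Theorem~\ref{thm:iff_condition} at all. Recall that for $CS_{k}$ there are $N=k+1$ maps, each of ratio $1/(2k+1)$, so $\dim_{S}(CS_{k})=\log(k+1)/\log(2k+1)$. Combining Lemma~\ref{lem:trace_riemannsum} with Definition~\ref{def:integrable}, for any $f,g$ on $CS_{k}$ we have
\[
\phi_{n}(f,g)=\sum_{\bs{s}\in S^{\times n}}\bigl(f(a_{\bs{s}})+f(b_{\bs{s}})\bigr)\bigl(g(b_{\bs{s}})-g(a_{\bs{s}})\bigr).
\]
The two properties of the generalized Cantor function that I will import from Appendix~\ref{sec:generalized-Cantor-function} are the self-similar increment identity $c_{k,p}(b_{\bs{s}})-c_{k,p}(a_{\bs{s}})=p^{\,n}$ for every $\bs{s}\in S^{\times n}$, and the reflection symmetry $c_{k,p}(1-x)=1-c_{k,p}(x)$; both hold for all $0<p<1$.

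For part (1) I take $f\equiv 1$, so $f(a_{\bs{s}})+f(b_{\bs{s}})=2$ and the increment identity gives
\[
\phi_{n}(1,c_{k,p})=2\sum_{\bs{s}\in S^{\times n}}p^{\,n}=2(k+1)^{n}p^{\,n}=2\bigl((k+1)p\bigr)^{n}.
\]
Everything now hinges on the size of $(k+1)p$. If $p<1/(k+1)$ the geometric factor tends to $0$, so the pair is integrable with $\phi(1,c_{k,p})=0$; if $p=1/(k+1)$ the sequence is constantly equal to $2$, so $\phi(1,c_{k,p})=2$; and if $p>1/(k+1)$ then $\phi_{n}(1,c_{k,p})\to+\infty$, so the sequence diverges and we record $\phi(1,c_{k,p})=\infty$. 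In the regime $p<1/(k+1)$ the function $c_{k,p}$ is monotone and one checks that it lies in $C^{\beta}$ with $\beta=\log(1/p)/\log(2k+1)>\dim_{S}(CS_{k})$, so the vanishing is also predicted by Theorem~\ref{thm:vanish}.

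For part (2) I take $f=c_{k,p}$ and $g=c_{k,q}$. Applying the increment identity to $g=c_{k,q}$ pulls the $\bs{s}$-dependence out of the second factor:
\[
\phi_{n}(c_{k,p},c_{k,q})=q^{\,n}\sum_{\bs{s}\in S^{\times n}}\bigl(c_{k,p}(a_{\bs{s}})+c_{k,p}(b_{\bs{s}})\bigr).
\]
The crux is to show that the remaining sum equals $(k+1)^{n}$ \emph{independently of $p$}. For this I use the reflection: the involution $\bs{s}=(s_{1},\dots,s_{n})\mapsto\bs{t}:=(k-s_{1},\dots,k-s_{n})$ of $S^{\times n}$ satisfies $1-a_{\bs{s}}=b_{\bs{t}}$, whence $c_{k,p}(a_{\bs{s}})=1-c_{k,p}(b_{\bs{t}})$ by the symmetry. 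Summing over $\bs{s}$ and reindexing by the bijection $\bs{s}\leftrightarrow\bs{t}$ yields
\[
\sum_{\bs{s}}c_{k,p}(a_{\bs{s}})=(k+1)^{n}-\sum_{\bs{s}}c_{k,p}(b_{\bs{s}}),
\]
that is, $\sum_{\bs{s}}\bigl(c_{k,p}(a_{\bs{s}})+c_{k,p}(b_{\bs{s}})\bigr)=(k+1)^{n}$. Hence $\phi_{n}(c_{k,p},c_{k,q})=\bigl((k+1)q\bigr)^{n}$, and the three cases follow exactly as in part (1): $\phi=0$ for $q<1/(k+1)$, $\phi=1$ for $q=1/(k+1)$, and $\phi=\infty$ for $q>1/(k+1)$. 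In particular the value depends only on $q$, as the statement asserts.

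The only step that is more than a one-line substitution, and hence the main obstacle, is the evaluation $\sum_{\bs{s}}\bigl(c_{k,p}(a_{\bs{s}})+c_{k,p}(b_{\bs{s}})\bigr)=(k+1)^{n}$ in part (2). In contrast to part (1), the individual endpoint values $c_{k,p}(a_{\bs{s}})$ and $c_{k,p}(b_{\bs{s}})$ are not geometric in $\bs{s}$: once $p\neq 1/(k+1)$ the function $c_{k,p}$ is no longer constant on the holes $J_{\bs{s},l}$, so a head-on computation would have to track all the gap increments level by level. The reflection symmetry is exactly what lets me avoid that bookkeeping and obtain the clean, $p$-independent answer. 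Should the symmetry be unavailable, I would instead derive a closed linear recursion in $n$ for the pair $T_{n}=\sum_{\bs{s}}c_{k,p}(a_{\bs{s}})$ and $U_{n}=\sum_{\bs{s}}c_{k,p}(b_{\bs{s}})$ from the self-similar structure of $c_{k,p}$ and solve it, arriving at the same value $T_{n}+U_{n}=(k+1)^{n}$.
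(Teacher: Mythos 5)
Your proof is correct and takes essentially the same route as the paper: the same increment identity $c_{k,p}(b_{\bs{s}})-c_{k,p}(a_{\bs{s}})=p^{n}$ reduces both parts to the geometric sequences $2((k+1)p)^{n}$ and $((k+1)q)^{n}$, and your pairing of $\bs{s}$ with $(k-s_{1},\dots,k-s_{n})$ is exactly the paper's involution $\bs{s}\mapsto\bs{s}'$. The only cosmetic difference is that you package the key cancellation as the reflection symmetry $c_{k,p}(1-x)=1-c_{k,p}(x)$; be aware this symmetry is not actually stated in Appendix \ref{sec:generalized-Cantor-function} (so it cannot be ``imported''), but it follows in one line from Definition \ref{def:c_{k,p}-definition}, just as the paper verifies $c_{k,p}(a_{\bs{s}})+c_{k,p}(b_{\bs{s}'})=1$ directly from the digit expansion.
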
 

\begin{proof} 
Let $\bs{s} \in S^{\times n}$. 
Then we have 
\begin{align*} 
c_{k,p}(b_{\bs{s}}) - c_{k,p}(a_{\bs{s}}) 
&= 
\dfrac{1-p}{k} \left( \sum_{i=1}^{n} s_{i}p^{i-1}  +  \sum_{i=n+1}^{\infty} kp^{i-1} \right) 
-
\dfrac{1-p}{k}\sum_{i=1}^{n} s_{i}p^{i-1} \\ 
&= 
(1-p)\sum_{i=n+1}^{\infty} p^{i-1} 
= 
p^{n}. 
\end{align*} 
\begin{enumerate} 
\item 
For any $n \in \mathbb{N}$, we have 
\begin{align*} 
\phi_{n}(1, c_{k,p}) 
&= 
2\sum_{\bs{s} \in S^{\times n}} \left( c_{k,p}(b_{\bs{s}}) - c_{k,p}(a_{\bs{s}}) \right)   
= 
2\sum_{\bs{s} \in S^{\times n}} p^{n} \\
&= 
2\left( (k+1)p \right)^{n} 
\longrightarrow 
\begin{cases} 
0 & \text{for}\ 0 < p < 1/(k+1) \\ 
2 & \text{for}\ p = 1/(k+1) \\ 
\infty & \text{for}\ 1/(k+1) < p < 1 
\end{cases} \quad (n \to \infty). 
\end{align*} 
This proves (1).  
\item 
For any $\bs{s} \in S^{\times n}$, 
we denote $\bs{s}' = (k-s_{1}, k-s_{2}, \dots , k-s_{n})$. 
We have 
\begin{align*} 
\phi_{n}(c_{k,p}, c_{k,q}) 
&= 
\sum_{\bs{s} \in S^{\times n}} \left( c_{k,p}(a_{\bs{s}}) + c_{k,p}(b_{\bs{s}}) \right) \left( c_{k,q}(b_{\bs{s}}) - c_{k,q}(a_{\bs{s}}) \right)  \\   
&= 
\sum_{\bs{s} \in S^{\times n}} \left( c_{k,p}(a_{\bs{s}}) + c_{k,p}(b_{\bs{s}}) \right) q^{n} \\  
&= 
q^{n}\sum_{\bs{s} \in S^{\times n}} \left( c_{k,p}(a_{\bs{s}}) + c_{k,p}(b_{\bs{s}'}) \right)  \\  
&= 
q^{n} 
\dfrac{1-p}{k}
\sum_{\bs{s} \in S^{\times n}} \left( \sum_{i=1}^{n} s_{i}p^{i-1} 
+ \sum_{i=1}^{n} (k-s_{i})p^{i-1} + \sum_{i=n+1}^{\infty} k p^{i-1}  \right) 
\\ 
&= 
(1-p)
q^{n}\sum_{\bs{s} \in S^{\times n}} \left( \sum_{i=1}^{\infty} p^{i-1}  \right) 
=  
q^{n}\sum_{\bs{s} \in S^{\times n}} 1  \\ 
&= 
\left( (k+1)q \right)^{n} 
\longrightarrow 
\begin{cases} 
0 & \text{for}\ 0 < q < 1/(k+1) \\ 
1 & \text{for}\ q = 1/(k+1) \\ 
\infty & \text{for}\ 1/(k+1) < q < 1 
\end{cases} \quad (n \to \infty). 
\end{align*} 
This proves (2).  
\end{enumerate} 
\end{proof} 

By Theorem \ref{thm:c_{k,a}-Holder}, 
if $pq < 1/(k+1)$, then 
the H\"{o}lder exponent of $c_{k,p}$ 
plus 
the H\"{o}lder exponent of $c_{k,q}$ 
is greater than $\dim_{S}(CS_{k})$. 
So this means that by Theorem \ref{thm:iff_condition},
the series $\sum_{n=1}^{\infty} \psi_{n}(c_{k,p} , c_{k,q})$ 
diverges the case when $pq < 1/(k+1)$ and $1/(k+1) < q < 1$. 
We can also prove it directly. 
In fact, for any $n = 0,1,2,\dots$, 
we have 
\begin{align*} 
\psi_{n+1} (c_{k,p} , c_{k,q}) 
&= 
\sum_{\bs{s} \in S^{\times n}} \sum_{l=0}^{k-1} \left( c_{k,p}(b_{(\bs{s},l)}) + c_{k,p}(a_{(\bs{s},l+1)}) \right) \left( c_{k,p}(a_{(\bs{s},l+1)}) - c_{k,p}(b_{(\bs{s},l)}) \right) \\ 
&= 
\sum_{\bs{s} \in S^{\times n}} \sum_{l=0}^{k-1} \left( c_{k,p}(b_{(\bs{s},l)}) + c_{k,p}(a_{(\bs{s},l+1)}) \right) \dfrac{1-q}{k} \left( q^{n} - \dfrac{k}{1-q}q^{n+1} \right) \\ 
&= 
\left( \dfrac{1-q}{k} - q \right)q^{n}\sum_{\bs{s} \in S^{\times n}} \sum_{l=0}^{k-1} \left( c_{k,p}(b_{(\bs{s},l)}) + c_{k,p}(a_{(\bs{s},l+1)}) \right) \\ 
&= 
\left( \dfrac{1-q}{k} - q \right)q^{n}\sum_{\bs{s} \in S^{\times n}} \sum_{l=0}^{k-1} \left( c_{k,p}(b_{(\bs{s},l)'}) + c_{k,p}(a_{(\bs{s},l+1)}) \right) \\ 
&= 
\left( \dfrac{1-q}{k} - q \right)q^{n}\sum_{\bs{s} \in S^{\times n}} \sum_{l=0}^{k-1} \left( 1 + \dfrac{p^{n}}{k} - \dfrac{k+1}{k}p^{n+1} \right) \\ 
&= 
\left( 1 + \dfrac{p^{n}}{k} - \dfrac{k+1}{k}p^{n+1} \right) 
\left( \dfrac{1-q}{k} - q \right)q^{n}k(k+1)^{n}. 
\end{align*} 
This implies that 
the series $\sum_{n=1}^{\infty} \psi_{n}(c_{k,p} , c_{k,q})$ 
diverges for $1/(k+1) \leq pq$ or $1/(k+1) < q < 1$. 
Figure \ref{fig:integrability} presents integrability of $(c_{k,p} , c_{k,q})$. 
Here, (i) and (ii) mean  
\begin{enumerate}[(i)] 
\item 
the area of too small H\"{o}lder exponents, and  
\item 
the area in which the series $\sum \psi_{n}$ diverges. But it has sufficiently big H\"{o}lder exponents. 
\end{enumerate} 

\begin{figure}[h] 
\begin{tikzpicture}[scale = 2.2]
\def \k{2}; 
\def \a{0.25}; 
\def \d{0.05}; 
\draw[->,>=stealth,thick] (-0.1,0)--(1.3,0)node[right]{$p$};  
\draw[->,>=stealth,thick] (0,-0.1)--(0,1.3)node[above]{$q$}; 
\coordinate[label = below left:O] (O) at (0,0); 
\coordinate[label = below:$1$] (A) at (1,0); 
\coordinate[label = left:$1$] (B) at (0,1); 
\coordinate[label = below:{\small $1/(k+1)$}] (C1) at ({1/(\k + 1)},0); 
\coordinate[label = left:{\small $1/(k+1)$}] (C2) at (0,{1/(\k + 1)});  
\draw[dotted] (A) -- ($(A) + (B)$) -- (B); 
\draw[dotted] (C1) -- ($(C1) + (B)$); 
\draw[dotted] (C2) -- ($(C2) + (A)$);  
\draw[samples = 50 , domain = {1/(\k+1)-0.07}:{1+0.3}] plot (\x,{1/((\k+1)*\x)})node[above right]{$pq = 1/(k+1)$};   
\node at (1/2, {1/(2*(\k + 1))}) {integrable}; 
\draw[fill, color = white] ({(2.3)/(\k + 1)}, {(2.3)/(\k + 1)}) circle (0.12) node[color = black]{(i)};  
\draw[fill, color = white] ({(0.9)/(\k + 1)}, {(1.8)/(\k + 1)}) circle (0.12) node[color = black]{(ii)};  
\end{tikzpicture}
\caption{Integrability of $(c_{k,p} , c_{k,q})$.} 
\label{fig:integrability}
\end{figure}
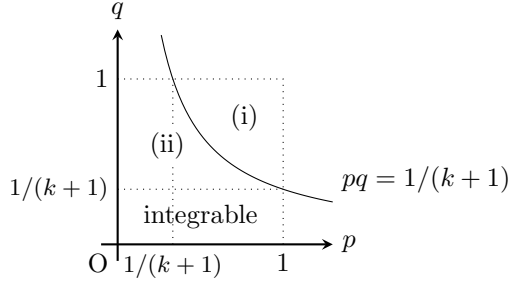 

\subsection{Integral of polynomials} 
\label{subsec:integrability} 
In this section,  
we focus on the integration of polynomials 
defined on the middle third Cantor set $CS$. 
More precisely, we calculate $\phi(x^{m}, c)$ for 
non-negative integers $m=0,1,2,\dots$, 
where we denote $c : CS \to \mathbb{R}$ the restriction of the Cantor function onto $CS$. 
Note that the case of $m=0$ is already calculated in Theorem \ref{thm:integration-c_{k,p}} (1) 
and 
that of $m=1$ is also in Theorem \ref{thm:integration-c_{k,p}} (2); 
see also Proposition \ref{prop:c_{k,a}-property}.

\begin{example} 
\label{phi_cantor}
We have  
\begin{align*}
\phi(x^{m}, c) = \begin{dcases*}
    \, 2 & for $m=0$, \\ 
    \frac{1}{2} \sum_{r=0}^{m-1} \left(- \frac{1}{3} \right)^{r} \binom{m}{r} \phi(x^{r}, c) & for $m \colon$odd, \\
    \frac{3^{m}}{2 \left(3^{m} - 1\right)} \sum_{r=0}^{m-1} \left(- \frac{1}{3} \right)^{r} \binom{m}{r} \phi(x^{r}, c) & for $m \colon$even. 
\end{dcases*}
\end{align*} 
\end{example}

\begin{proof}  
We now prove our claims for $m \geq 1$.
For each $n \in \mathbb{N}$, we take the union of boundary points of $I_{n, k}$ for all $k \in \{0, 1, \cdots, 2^{n+1} - 1\}$. 
Let $A_{n}$ be the set of numerators of the 
fractions representing the boundary points whose denominator is $3^{n}$.  
Then, $A_{n}$ may be written as  
\[
A_{n} = A_{n-1} \cup \{3^{n} - a\ ;\ a \in A_{n-1}\}.
\] 
Here we set $A_{0} = \{0, 1\}$ for convention. 
Define $S_{n}^{(m)}$ to be $\sum_{a \in A_{n}} a^{m}$, 
and we have 
\begin{align}
  \phi_{n}(x^{m}, c) &= \frac{1}{2^{n}} \sum_{k=0}^{2^{m}-1} ((a_{n,k})^{m} + (b_{n,k})^{m})  
      = \frac{1}{2^{n}}\frac{S_{n}^{(m)}}{3^{nm}}. \label{phimn}
\end{align}
Since 
\begin{align*}
(3^{n} - a)^{m} 
&= \sum_{r=0}^{m}(-1)^{r} \binom{m}{r} (3^{n})^{m-r}a^{r} 
= \sum_{r=0}^{m}(-1)^{r} \binom{m}{r} 3^{nm-nr}a^{r},
\end{align*}
we have 
\[
S_{n}^{(m)} 
=
\sum_{a \in A_{n-1}} a^{m} + \sum_{a \in A_{n-1}} (3^{n} - a)^{m}
= 
S_{n-1}^{(m)} + \sum_{r=0}^{m}(-1)^{r} \binom{m}{r} 3^{nm-nr} S_{m-1}^{(r)}.
\]  
By dividing the formula by $2^{n}\cdot3^{nm}$, we have
\begin{align*}
\frac{S_{n}^{(m)}}{2^{n}\cdot3^{nm}} &= \frac{S_{n-1}^{(m)}}{2^{n}\cdot3^{nm}} + \sum_{r=0}^{m}(-1)^{r} \binom{m}{r} 3^{nm-nr} \frac{S_{n-1}^{(r)}}{2^{n}\cdot3^{nm}}\\
&=\frac{1}{2\cdot 3^{m}} \frac{S_{n-1}^{(m)}}{2^{n-1}\cdot3^{m(n-1)}} + \sum_{r=0}^{m}(-1)^{r} \binom{m}{r} \frac{1}{2\cdot 3^{r}} \frac{S_{n-1}^{(r)}}{2^{n-1}\cdot3^{r(n-1)}}.
\end{align*}
By the equation (\ref{phimn}), the above last equation may be written as 
\[
\phi_{n}(x^{m}, c) = \frac{1}{2\cdot 3^{m}} \phi_{n-1}(x^{m}, c) + \frac{1}{2}\sum_{r=0}^{m}\left(-\frac{1}{3}\right)^{r} \binom{m}{r} \phi_{n-1}(x^{r}, c).
\]
Therefore, as $n \rightarrow \infty$, we get 
\begin{align*}
\phi(x^{m}, c) 
&= \frac{1}{2\cdot 3^{m}} \phi(x^{m}, c) + \frac{1}{2}\sum_{r=0}^{m}\left(-\frac{1}{3}\right)^{r} \binom{m}{r} \phi(x^{r}, c) \\ 
&= 
\begin{dcases} 
\frac{1}{2}\sum_{r=0}^{m-1}\left(-\frac{1}{3}\right)^{r} \binom{m}{r} \phi(x^{r}, c) & \text{for}\ m \colon \text{odd}, \\ 
\frac{1}{3^{m}} \phi(x^{m}, c) + \frac{1}{2}\sum_{r=0}^{m-1}\left(-\frac{1}{3}\right)^{r} \binom{m}{r} \phi(x^{r}, c) & \text{for}\ m \colon \text{even}. 
\end{dcases} 
\end{align*} 
This completes the proof.  
\end{proof}

Here, we give some explicit calculation for small $m$ in Example \ref{phi_cantor}.

$m = 1 \colon \phi(x, c) = 1$. 

$m = 2 \colon $
\begin{align*}
\phi(x^{2}, c) 
&=\frac{9}{16} \left(\phi(x^{0}, c) - \frac{2}{3} \phi(x^{1}, c) \right) 
= \frac{9}{16} \left(2 - \frac{2}{3} \right) 
= \frac{3}{4}.
\end{align*}

$m = 3 \colon $
\begin{align*}
\phi(x^{3}, c)  
&= \frac{1}{2} \left(\phi(x^{0}, c) - \frac{1}{3} \cdot 3 \cdot \phi(x^{1}, c) + \frac{1}{3^{2}} \cdot 3 \cdot \phi(x^{2}, c) \right) \\
&= \frac{1}{2} \left(2 - 1 + \frac{1}{3} \cdot \frac{3}{4} \right) = \frac{5}{8}.
\end{align*}

$m = 4 \colon $
\begin{align*}
\phi(x^{4}, c) 
&= \frac{3^{4}}{2 \left(3^{4} - 1\right)} \left(\phi(x^{0}, c) - \frac{4}{3} \cdot \phi(x^{1}, c) + \frac{6}{3^{2}} \cdot \phi(x^{2}, c) - \frac{4}{3^{3}}\cdot \phi(x^{3}, c) \right) \\
&= \frac{3^{4}}{2 \left(3^{4} - 1\right)} \left(2 - \frac{4}{3} + \frac{2}{3} \cdot \frac{3}{4} - \frac{4}{3^{3}} \cdot \frac{5}{8} \right) = \frac{3}{2 \cdot 80} \cdot 29 = \frac{87}{160}.
\end{align*}

$m = 5 \colon $
\begin{align*}
\phi(x^{5}, c) 
&= \frac{1}{2} \left(\phi(x^{0}, c) - \frac{5}{3} \cdot \phi(x^{1}, c) + \frac{10}{3^{2}} \cdot \phi(x^{2}, c) - \frac{10}{3^{3}} \cdot \phi(x^{3}, c) + \frac{5}{3^{4}} \cdot \phi(x^{4}, c) \right) \\
&= \frac{1}{2} \left(2 - \frac{5}{3} + \frac{10}{3^{2}} \cdot \frac{3}{4} - \frac{10}{3^{3}} \cdot \frac{5}{8} + \frac{5}{3^{4}} \cdot \frac{87}{160} \right) = \frac{31}{64}.
\end{align*}

$m = 6 \colon $
\begin{align*}
\phi(x^{6}, c) 
&= \frac{3^{6}}{2 \left(3^{6} - 1\right)} \left( \phi(x^{0}, c) - \frac{6}{3} \cdot \phi(x^{1}, c) + \frac{15}{3^{2}} \cdot \phi(x^{2}, c) \right. \\
&\ \hspace{30mm} \left. - \frac{20}{3^{3}} \cdot \phi(x^{3}, c) + \frac{15}{3^{4}} \cdot \phi(x^{4}, c) - \frac{6}{3^{4}} \cdot \phi(x^{5}, c) \right) \\
&= \frac{729}{2 \cdot 728} \left(2 - \frac{6}{3}  + \frac{15}{3^{2}} \cdot \frac{3}{4} - \frac{20}{3^{3}} \cdot \frac{5}{8} + \frac{15}{3^{4}} \cdot \frac{87}{160} - \frac{6}{3^{4}} \cdot \frac{31}{64} \right) \\ 
&= \frac{5278905}{11927552}.
\end{align*}

\section{Integration by substitution} 
\label{sec:substitution}

\subsection{Statement of integration by substitution} 
\label{subsec:statement-substitution}

We study integration by substitution for our integration  
by using a modified version of isomorphism  
of self-similar structure of IFS; 
see also \cite[Definition 1.3.2]{MR1840042}.  
Recall that our self-similar structures satisfying  
the images $f_{s}(I)$ are ordered from left to right with respect to the indices $s$, that is, 
\[
a = a_{0} < b_{0} \leq a_{1} < b_{1} \leq \cdots \leq a_{N-1} < b_{N-1} = b.  
\]

Set 
$S^{(1)} = S^{(2)} = \{ 0,1,\dots, N-1 \}$. 
Let $\mathcal{L}^{(i)} = \left(I^{(i)}, S^{(i)}, \{ f^{(i)}_{s} \}_{s \in S^{(i)}} \right)$ be IFS on an interval $I^{(i)} = \left[ a^{(i)}, b^{(i)} \right]$ 
and $K^{(i)}$ the associated self-similar set of $\mathcal{L}^{(i)}$ for $i = 1, 2$.  
There exists the unique 
continuous surjection $\pi^{(i)} \colon \Sigma(S^{(i)}) \to K^{(i)}$, 
where $\Sigma(S^{(i)}) = (S^{(i)})^{\mathbb{N}}$ 
is the shift space with symbols $S^{(i)}$; 
see also \cite[Definition 1.3.3]{MR1840042}.   
For a permutation $\rho \colon S^{(1)} \to S^{(2)}$, 
we set $\iota_{\rho}(s_{1}s_{2} \cdots) = \rho(s_{1}) \rho(s_{2}) \cdots  \in \Sigma(S^{(2)})$ 
for $s_{1}s_{2} \cdots \in \Sigma(S^{(1)})$. 
We assume that  
$T_{\rho} := \pi^{(2)} \circ \iota_{\rho} \circ (\pi^{(1)})^{-1}$ 
is a well-defined \textit{map}  from $K^{(1)}$ to $K^{(2)}$.

\begin{theorem}[Integration by substitution] 
\label{thm:substitution} 
Let $\phi^{(i)}$ be our integration on $K^{(i)}$. 
For any
functions $f,g$ on $K^{(2)}$ such that
the pair $(f,g)$ is integrable on $K^{(2)}$,   
we have the following. 
\begin{enumerate} 
\item 
Assume that $\rho$ preserves $0$ and $N-1$, respectively. 
Then a pair $(f \circ T_{\rho} , g \circ T_{\rho})$ is integrable on $K^{(1)}$ 
and we have $\phi^{(1)} (f \circ T_{\rho} , g \circ T_{\rho}) = \phi^{(2)} (f,g)$.
\item 
Assume that $\rho$ flips $0$ and $N-1$.  
Then a pair $(f \circ T_{\rho} , g \circ T_{\rho})$ is integrable on $K^{(1)}$ 
and we have $\phi^{(1)} (f \circ T_{\rho} , g \circ T_{\rho}) = - \phi^{(2)} (f,g)$. 
\end{enumerate} 

\begin{proof} 
Take $\bs{s} = (s_{1} , s_{2} , \cdots , s_{n}) \in (S^{(i)})^{\times n}$.  
By the definition of $\pi^{(i)}$, we have 
\begin{align*} 
\pi^{(i)}(s_{1} s_{2} \cdots s_{n} 00 \dots) 
&= f^{(i)}_{\bs{s}} (a^{(i)}) 
= a^{(i)}_{\bs{s}}  \\ 
\shortintertext{and}
\pi^{(i)}(s_{1} s_{2} \cdots s_{n} (N-1)(N-1) \dots) 
&= f^{(i)}_{\bs{s}} ( b^{(i)})   
= b^{(i)}_{\bs{s}}.  
\end{align*} 
The arguments are derived as follows: 
\begin{enumerate} 
\item 
If $\rho$ satisfies $\rho(0) = 0$ and $\rho(N-1) = N-1$, 
we have 
\begin{align*} 
T_{\rho}(a^{(1)}_{\bs{s}}) 
&= 
(\pi^{(2)} \circ \iota_{\rho} \circ (\pi^{(1)})^{-1}) (a^{(1)}_{\bs{s}}) \\ 
&= 
(\pi^{(2)} \circ \iota_{\rho} )(s_{1} s_{2} \cdots s_{n} 00 \dots) \\
&= 
\pi^{(2)} (\rho(s_{1}) \rho(s_{2}) \cdots \rho(s_{n}) 00 \cdots) \\ 
&= 
a^{(2)}_{\rho(\bs{s})} \\ 
\shortintertext{and}
T_{\rho}(b^{(1)}_{\bs{s}}) 
&= 
(\pi^{(2)} \circ \iota_{\rho} \circ (\pi^{(1)})^{-1}) (b^{(1)}_{\bs{s}}) \\ 
&= 
(\pi^{(2)} \circ \iota_{\rho} )(s_{1} s_{2} \cdots s_{n} (N-1)(N-1) \cdots) \\
&= 
\pi^{(2)} (\rho(s_{1}) \rho(s_{2}) \cdots \rho(s_{n}) (N-1)(N-1) \cdots) \\ 
&= 
b^{(2)}_{\rho(\bs{s})},   
\end{align*} 
where we denote $\rho(\bs{s}) = (\rho(s_{1}), \cdots , \rho(s_{n}))$. 
Thus we have 
\begin{align*} 
&\phantom{=}\ \phi^{(1)}_{n} (f \circ T_{\rho} , g \circ T_{\rho}) \\  
&= 
\sum_{\bs{s} \in S^{\times n}} \left( (f \circ T_{\rho})(a_{\bs{s}}^{(1)}) + f \circ T_{\rho}(b_{\bs{s}}^{(1)}) \right) \left( (g \circ T_{\rho})(b_{\bs{s}}^{(1)}) - (g \circ T_{\rho})(a_{\bs{s}}^{(1)}) \right) \\  
&= 
\sum_{\bs{s} \in S^{\times n}} \left( f(a_{\rho(\bs{s})}^{(2)}) + f(b_{\rho(\bs{s})}^{(2)}) \right) \left( g(b_{\rho(\bs{s})}^{(2)}) - g(a_{\rho(\bs{s})}^{(2)}) \right) \\  
&= 
\sum_{\bs{s} \in S^{\times n}} \left( f(a_{\bs{s}}^{(2)}) + f(b_{\bs{s}}^{(2)}) \right) \left( g(b_{\bs{s}}^{(2)}) - g(a_{\bs{s}}^{(2)}) \right) \\  
&= 
\phi^{(2)}_{n}(f,g) 
\end{align*} 
for any $n \in \mathbb{N}$. 
This proves (1). 
\item 
If $\rho$ satisfies $\rho(0) = N-1$ and $\rho(N-1) = 0$, 
we have 
\begin{align*} 
T_{\rho}(a^{(1)}_{\bs{s}}) 
&= 
(\pi^{(2)} \circ \iota_{\rho} \circ (\pi^{(1)})^{-1}) (a^{(1)}_{\bs{s}}) \\ 
&= 
(\pi^{(2)} \circ \iota_{\rho} )(s_{1} s_{2} \cdots s_{n} 00 \cdots) \\
&= 
\pi^{(2)} (\rho(s_{1}) \rho(s_{2}) \cdots \rho(s_{n}) (N-1)(N-1) \cdots) \\ 
&= 
b^{(2)}_{\rho(\bs{s})} \\ 
\shortintertext{and}
T_{\rho}(b^{(1)}_{\bs{s}}) 
&= 
(\pi^{(2)} \circ \iota_{\rho} \circ (\pi^{(1)})^{-1}) (b^{(1)}_{\bs{s}}) \\ 
&= 
(\pi^{(2)} \circ \iota_{\rho} )(s_{1} s_{2} \cdots s_{n} (N-1)(N-1) \cdots) \\
&= 
\pi^{(2)} (\rho(s_{1}) \rho(s_{2}) \cdots \rho(s_{n}) 00 \cdots) \\ 
&= 
a^{(2)}_{\rho(\bs{s})}.     
\end{align*}  
Thus we have 
\begin{equation*} 
\phi^{(1)}_{n} (f \circ T_{\rho} , g \circ T_{\rho})
= 
-\phi^{(2)}_{n}(f,g) 
\end{equation*} 
for any $n \in \mathbb{N}$. 
This proves (2). 
\end{enumerate} 
\end{proof} 

\end{theorem}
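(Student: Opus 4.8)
The plan is to reduce the statement to an explicit description of how $T_\rho$ acts on the two extreme endpoints $a_{\bs{s}}^{(1)}$ and $b_{\bs{s}}^{(1)}$ of each cylinder interval $I_{\bs{s}}^{(1)}$, because by Lemma \ref{lem:trace_riemannsum} the Riemann-type sum $\phi_n^{(1)}$ is assembled entirely from the values of $f \circ T_\rho$ and $g \circ T_\rho$ at precisely these points. The crucial preliminary observation is that, since $f_0^{(i)}$ fixes $a^{(i)}$ and $f_{N-1}^{(i)}$ fixes $b^{(i)}$ (a consequence of the ordering convention $a^{(i)} = a_0^{(i)}$ and $b_{N-1}^{(i)} = b^{(i)}$), the endpoints carry the distinguished codings
\[
a_{\bs{s}}^{(i)} = \pi^{(i)}(s_1 \cdots s_n\, 00\cdots), \qquad
b_{\bs{s}}^{(i)} = \pi^{(i)}(s_1 \cdots s_n\, (N-1)(N-1)\cdots).
\]

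First I would record these two coding identities. Next I would feed them into the definition $T_\rho = \pi^{(2)} \circ \iota_\rho \circ (\pi^{(1)})^{-1}$ and split into the two cases. When $\rho$ fixes $0$ and $N-1$, the constant tails $00\cdots$ and $(N-1)(N-1)\cdots$ are preserved by $\iota_\rho$, so that $T_\rho(a_{\bs{s}}^{(1)}) = a_{\rho(\bs{s})}^{(2)}$ and $T_\rho(b_{\bs{s}}^{(1)}) = b_{\rho(\bs{s})}^{(2)}$; when $\rho$ interchanges $0$ and $N-1$, the two tails are swapped, so that $T_\rho(a_{\bs{s}}^{(1)}) = b_{\rho(\bs{s})}^{(2)}$ and $T_\rho(b_{\bs{s}}^{(1)}) = a_{\rho(\bs{s})}^{(2)}$, where $\rho(\bs{s}) = (\rho(s_1), \dots, \rho(s_n))$.

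I would then substitute these into
\[
\phi_n^{(1)}(f \circ T_\rho, g \circ T_\rho) = \sum_{\bs{s} \in S^{\times n}} \big( f(T_\rho a_{\bs{s}}^{(1)}) + f(T_\rho b_{\bs{s}}^{(1)}) \big)\big( g(T_\rho b_{\bs{s}}^{(1)}) - g(T_\rho a_{\bs{s}}^{(1)}) \big)
\]
and reindex by the substitution $\bs{s} \mapsto \rho(\bs{s})$, which is a bijection of $S^{\times n}$ since $\rho$ is a permutation. In case (1) this yields $\phi_n^{(1)} = \phi_n^{(2)}$ for every $n$; in case (2) the $a$- and $b$-endpoints are swapped, so the symmetric factor $f(a) + f(b)$ is unchanged while $g(b) - g(a)$ becomes $g(a) - g(b)$, producing the sign $-1$ and hence $\phi_n^{(1)} = -\phi_n^{(2)}$. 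Since these equalities hold termwise in $n$, convergence of $\{\phi_n^{(2)}(f,g)\}$ is equivalent to convergence of $\{\phi_n^{(1)}(f \circ T_\rho, g \circ T_\rho)\}$ and transfers the limit with the stated sign, which gives both integrability of the pulled-back pair and the two formulas.

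The step I expect to be most delicate is the coding identity together with its interaction with the nonuniqueness of $\pi^{(i)}$: an endpoint $a_{\bs{s}}$ or $b_{\bs{s}}$ may be an overlap point carrying more than one code, so $(\pi^{(1)})^{-1}$ is genuinely set-valued there. The standing hypothesis that $T_\rho$ is a well-defined \emph{map} is exactly what guarantees that $T_\rho(a_{\bs{s}}^{(1)})$ does not depend on the chosen code, and this is what licenses evaluating it on the convenient tail $00\cdots$. I would therefore state this reliance on the hypothesis explicitly rather than leave it implicit.
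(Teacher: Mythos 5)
Your proposal is correct and follows essentially the same route as the paper: record the codings $a_{\bs{s}}^{(i)} = \pi^{(i)}(s_1\cdots s_n 00\cdots)$ and $b_{\bs{s}}^{(i)} = \pi^{(i)}(s_1\cdots s_n (N-1)(N-1)\cdots)$, deduce how $T_\rho$ maps endpoints to endpoints in each case, substitute into $\phi_n^{(1)}$, and reindex by the bijection $\bs{s} \mapsto \rho(\bs{s})$ of $S^{\times n}$ to get the termwise identity $\phi_n^{(1)}(f\circ T_\rho, g\circ T_\rho) = \pm\phi_n^{(2)}(f,g)$, from which integrability and the formulas follow. Your closing remark about the set-valuedness of $(\pi^{(1)})^{-1}$ at overlap points, and the role of the standing hypothesis that $T_\rho$ is a well-defined map, is a worthwhile point of care that the paper leaves implicit.
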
 

\subsection{Moriyoshi and Natsume's example}
\label{subsec:MN-substitution}

We here recall a Young-type integration introduced by Moriyoshi and Natsume \cite{talk:Moriyoshi2013} with our notation,  
which is the first example of a Young-type integration on $CS$, 
and we clarify the integration is given as an example of Theorem \ref{thm:substitution}.  
Let $c \colon CS \to \mathbb{R}$ be the restriction on $CS$ 
of the Cantor function. 
In 
\cite{talk:Moriyoshi2013}, they essentially states that for any functions 
$f  , g   \in  C^{BV}$, where $C^{BV}$ is the continuous functions of bounded variation on $I = [0,1]$, 
the pair $(f \circ c, g \circ c)$ is integrable and the following holds: 
\begin{equation} 
\label{eq:MN's-formula}
\phi(f \circ c, g \circ c) = 2\int_{0}^{1} f(x)dg(x),  
\end{equation}  
where the right hand side is the Young integration on $I$. 
They also stated that $\phi$ satisfies the cyclic condition 
when $CS$ is replaced with its quotient space $CS/\{0, 1\}$. 
The induced cyclic $1$-cochain equals $0$ on $c^{\ast}C^{Lip}(I/\{ 0,1 \})$ 
and defines a non-trivial cyclic $1$-cocycle on $c^{\ast}C^{BV}(I/\{ 0,1 \})$. 

We next show that equation \eqref{eq:MN's-formula} can be considered as an example of Theorem \ref{thm:substitution}. 
Let $\mathcal{L}^{(1)} = \left( I , \{ 0,1 \} , \{ f^{(1)}_{0} , f^{(1)}_{1} \} \right)$ 
be the IFS of $CS$ described at the end of Section \ref{sec:prelim} and 
$\mathcal{L}^{(2)} = \left( I , \{ 0,1 \} , \{ f^{(2)}_{0} , f^{(2)}_{1} \} \right)$ 
the IFS of $I$ described in Example \ref{exm:interval}. 
Because of the equations 
\begin{align*} 
f^{(1)}_{0}\left( \sum_{i=1}^{n} \dfrac{2a_{i}}{3^{i}} \right) 
= 
\sum_{i=1}^{n} \dfrac{2a_{i}}{3^{i+1}}  \ 
\text{ and } \ 
f^{(1)}_{1}\left( \sum_{i=1}^{n} \dfrac{2a_{i}}{3^{i}} \right) 
= 
\dfrac{2}{3} + 
\sum_{i=1}^{n} \dfrac{2a_{i}}{3^{i+1}}   
\end{align*} 
for $a_{i} = 0$ or $1$, 
we have 
\begin{equation*} 
f^{(1)}_{s}\left( \sum_{i=1}^{n} \dfrac{2a_{i}}{3^{i}} \right) 
= \dfrac{2s}{3} +  \sum_{i=1}^{n} \dfrac{2a_{i}}{3^{i+1}}  
\quad (s=0,1). 
\end{equation*} 
Then, we get 
\begin{equation*} 
\pi^{(1)}(s_{1}s_{2}\cdots) = \sum_{i=1}^{\infty} \dfrac{2s_{i}}{3^{i}} 
\quad (s_{1}s_{2}\cdots \in \Sigma(\{ 0,1 \})). 
\end{equation*} 
Similarly, we have 
\begin{equation*} 
\pi^{(2)}(s_{1}s_{2}\cdots) = \sum_{i=1}^{\infty} \dfrac{s_{i}}{2^{i}} 
\quad (s_{1}s_{2}\cdots \in \Sigma(\{ 0,1 \})). 
\end{equation*} 
Recall that the set $CS$ is identified with the set 
\begin{equation*} 
\left\{ \sum_{i=1}^{\infty} \dfrac{2s_{i}}{3^{i}}  \,;\, s_{i} = 0,1 \right\}. 
\end{equation*} 
Thus, we have 
\begin{equation*} 
T_{\mathrm{id}} \left( \sum_{i=1}^{\infty} \dfrac{2s_{i}}{3^{i}} \right) 
= 
\sum_{i=1}^{\infty} \dfrac{s_{i}}{2^{i}}, 
\end{equation*} 
which means nothing but $T_{\mathrm{id}} = c$. 
Therefore, we have 
\begin{equation*} 
\phi^{(1)}(f \circ c , g \circ c) 
= 
\phi^{(1)}(f \circ T_{\mathrm{id}} , g \circ T_{\mathrm{id}}) 
= 
\phi^{(2)} (f,g) 
\end{equation*} 
when the pair $(f,g)$ is integrable on $I$ by Theorem \ref{thm:substitution} (1). 
For example, when $f$ and $g$ are continuous and in Wiener classes of order $p$ and $q$, respectively, 
with $1/p + 1/q > 1$, 
the Stieltjes integral $Y(f,g)$ exists in the Riemann sense; see Theorem \ref{thm:existence_stieltjes}. 
Then, the pair $(f,g)$ is integrable and we get 
\begin{equation*} 
\phi^{(1)}(f \circ c , g \circ c) 
= 
\phi^{(1)}(f \circ T_{\mathrm{id}} , g \circ T_{\mathrm{id}}) 
= 
\phi^{(2)} (f,g) 
= 
2\int_{0}^{1} f(x)dg(x) . 
\end{equation*} 
Note that the equation \eqref{eq:MN's-formula} is the case for $p=q=1$.

\begin{remark} 
By the arguments of this section,  
we can regard  
$T_{\rho}$ as a kind of generalizations of the Cantor function $c$. 
\end{remark}

\subsection{Examples on an Interval} 
\label{subsec:exm2-substitution}

In this section, we study other examples of Theorem \ref{thm:substitution}, in which IFSs have different similarity ratio. 
Let  
$\mathcal{L}^{(1)} = \left(I , \{ 0,1 \} , \{ f^{(1)}_{0} , f^{(1)}_{1} \} \right)$ 
be the IFS of $K^{(1)} = I$ described in Example \ref{exm:interval} 
and 
$\mathcal{L}^{(2)} = \left( I , \{ 0,1 \} , \{ f^{(2)}_{0} , f^{(2)}_{1} \} \right)$ 
the IFS of $K^{(2)} = I$ such that 
\begin{equation*} 
f^{(2)}_{0}(x) = \dfrac{1}{3}x \quad \text{and} \quad 
f^{(2)}_{1}(x) = \dfrac{2}{3}x + \dfrac{1}{3}. 
\end{equation*} 
When we denote $d_{i} = \sum_{k=1}^{i}s_{k} - 1$ for 
$s_{1}s_{2} \dots \in \Sigma (\{ 0,1 \})$, we have  
\begin{align*} 
f^{(2)}_{0}\left( \sum_{i=1}^{n} \dfrac{2^{d_{i}}}{3^{i}}s_{i} \right) 
= 
\sum_{i=1}^{n} \dfrac{2^{d_{i}}}{3^{i+1}}s_{i}   
\quad \text{and} \quad 
f^{(2)}_{1}\left( \sum_{i=1}^{n} \dfrac{2^{d_{i}}}{3^{i}}s_{i} \right) 
= 
\dfrac{1}{3} 
+ 
\sum_{i=1}^{n} \dfrac{2^{d_{i}+1}}{3^{i+1}}s_{i}.  
\end{align*} 
Thus we have 
\begin{equation*} 
f^{(2)}_{s}\left( \sum_{i=1}^{n} \dfrac{2^{d_{i}}}{3^{i}}s_{i} \right) 
= 
\dfrac{s}{3} 
+ 
\sum_{i=1}^{n} \dfrac{2^{s+d_{i}}}{3^{i+1}}s_{i}.  
\end{equation*} 
Therefore, we have 
\begin{equation*}
\pi^{(2)}(s_{1}s_{2}\cdots) 
= 
\sum_{i=1}^{\infty} \dfrac{2^{d_{i}}}{3^{i}}s_{i}. 
\end{equation*} 

Next, we prove that $T_{\rho}$ ($\rho = \mathrm{id}$ or transposition $(0 \, 1)$) induces a well-defined map from $K^{(1)}$ to $K^{(2)}$. 
For the purpose, it is only to show that we have 
\begin{equation*}
\pi^{(2)}(s_{1}s_{2} \cdots s_{n}1000 \cdots) 
=
\pi^{(2)}(s_{1}s_{2} \cdots s_{n}0111 \cdots) 
\quad (\bs{s} \in S^{\times n}).  
\end{equation*} 
It is clear by the following calculation: 
\begin{align*} 
\pi^{(2)}(s_{1}s_{2} \cdots s_{n}1000 \cdots) 
&= 
\sum_{i=1}^{n} \dfrac{2^{d_{i}}}{3^{i}}s_{i} + \dfrac{2^{d_{n}+1}}{3^{n+1}} \\ 
\pi^{(2)}(s_{1}s_{2} \cdots s_{n}0111 \cdots) 
&= 
\sum_{i=1}^{n} \dfrac{2^{d_{i}}}{3^{i}}s_{i} + \sum_{i=n+2}^{\infty} \dfrac{2^{d_{n} + i-n-1}}{3^{i}} \\ 
&= 
\sum_{i=1}^{n} \dfrac{2^{d_{i}}}{3^{i}}s_{i} + \dfrac{2^{d_{n}-1}}{3^{n}}\sum_{i=2}^{\infty} \dfrac{2^{i}}{3^{i}} \\ 
&= 
\sum_{i=1}^{n} \dfrac{2^{d_{i}}}{3^{i}}s_{i} + \dfrac{2^{d_{n}+1}}{3^{n+1}}. 
\end{align*} 
Then we have a well-defined map 
\begin{equation*} 
T_{\mathrm{id}}\left( \sum_{i=1}^{\infty} \dfrac{s_{i}}{2^{i}} \right) 
= 
\sum_{i=1}^{\infty} \dfrac{2^{d_{i}}}{3^{i}}s_{i} 
\quad \text{and} \quad 
T_{(0\,1)}\left( \sum_{i=1}^{\infty} \dfrac{s_{i}}{2^{i}} \right) 
= 
\sum_{i=1}^{\infty} \dfrac{2^{i - d_{i} - 2}}{3^{i}}(1-s_{i}).  
\end{equation*} 

Next, we show that 
two pairs $(1 \circ T_{\mathrm{id}} , x \circ T_{\mathrm{id}})$ 
and $(1 \circ  T_{(0 \, 1)} , x  \circ T_{(0\,1)})$ 
are integrable on $K^{(1)}$. 
By definition, we have 
\begin{align*} 
(x \circ T_{\mathrm{id}})(b_{\bs{s}}) - (x \circ T_{\mathrm{id}})(a_{\bs{s}}) 
&= 
\sum_{i=n+1}^{\infty} \dfrac{2^{d_{n} + i-n}}{3^{i}} 
= 
\dfrac{2^{d_{n}}}{3^{n}}\sum_{i=1}^{\infty}\dfrac{2^{i}}{3^{i}} 
= 
\dfrac{2^{d_{n} + 1}}{3^{n}} \\ 
\shortintertext{and} 
(x \circ T_{(0\,1)})(b_{\bs{s}}) - (x \circ T_{(0\,1)})(a_{\bs{s}}) 
&= 
-\sum_{i=n+1}^{\infty} \dfrac{2^{i - d_{n} - 2}}{3^{i}}  
-\dfrac{2^{n - (d_{n} + 1)}}{3^{n}}. 
\end{align*} 
Since the number of $s_{1}s_{2} \cdots s_{n}000 \cdots$ such that $d_{i}+1 = r$ 
equals $\binom{n}{r}$, 
we have 
\begin{align*} 
\sum_{\bs{s} \in S^{\times n}} \left( (x \circ T_{\mathrm{id}})(b_{\bs{s}}) - (x \circ T_{\mathrm{id}})(a_{\bs{s}})  \right) 
&= 
\sum_{r=0}^{n}\binom{n}{r}\dfrac{2^{r}}{3^{n}} 
= 1 \\ 
\sum_{\bs{s} \in S^{\times n}} \left( (x \circ T_{(0\,1)})(b_{\bs{s}}) - (x \circ T_{(0\,1)})(a_{\bs{s}}) \right) 
&= 
-\sum_{r=0}^{n}\binom{n}{r}\dfrac{2^{n-r}}{3^{n}} 
= -1. 
\end{align*} 
Therefore, we have 
$\phi^{(1)}_{n}(1 \circ T_{\mathrm{id}} , x \circ T_{\mathrm{id}}) = 2$ 
and 
$\phi^{(1)}_{n}(1 \circ  T_{(0\,1)} , x  \circ T_{(0\,1)}) = -2$ 
for any $n \in \mathbb{N}$. 
Taking $n \to \infty$, we know that two pairs $(1 \circ T_{\mathrm{id}} , x \circ T_{\mathrm{id}})$ 
and $(1 \circ T_{(0\,1)} , x  \circ T_{(0\,1)})$ 
are integrable on $K^{(1)}$ 
and its limit equals $2$ and $-2$, respectively. 
This implies the followings, which are examples of Theorem \ref{thm:substitution}: 
\begin{align*} 
\phi^{(1)}(1 \circ T_{\mathrm{id}} , x \circ T_{\mathrm{id}}) 
&= 2
= 2\int_{0}^{1}dx
= \phi^{(2)}(1,x), \\
\phi^{(1)}(1 \circ T_{(0\,1)} , x  \circ T_{(0\,1)})
&= -2
= -2\int_{0}^{1}dx
= - \phi^{(2)}(1,x).  
\end{align*} 

\subsection{A permutation for which integration by substitution does not hold}
\label{sec:not_hold_substitution} 

In order to study a permutation for which integration by substitution does not hold, 
we deal with a 5-adic Cantor set $CS_{2}$; see Appendix \ref{sec:generalized-Cantor-function}. 
Let $I = [0,1]$ and we set 
\[
f_{0}(x) = \dfrac{1}{5}x, \quad 
f_{1}(x) = \dfrac{1}{5}x + \dfrac{2}{5}, \quad 
f_{2}(x) = \dfrac{1}{5}x + \dfrac{4}{5}.  
\]
Then a tuple 
$
\mathcal{L} = \mathcal{L}^{(1)} = \mathcal{L}^{(2)} 
= \left( I  , S = \{ 0,1,2 \} , \{ f_{0} , f_{1} , f_{2}  \}  \right)
$  
defines an IFS, and we denote its self-similar set $CS_{2} = K^{(1)} = K^{(2)}$. 
Similar to the calculation in 
Section \ref{subsec:statement-substitution}, \ref{subsec:MN-substitution} and \ref{subsec:exm2-substitution}, 
we have 
\begin{align*} 
\pi (s_{1} s_{2} \cdots s_{n} 000 \cdots ) &= f_{\bs{s}}(0) =  a_{\bs{s}} \\ 
\pi (s_{1} s_{2} \cdots s_{n} 111 \cdots) &= f_{\bs{s}}(1/2) = \dfrac{a_{\bs{s}} + b_{\bs{s}}}{2} \\ 
\pi (s_{1} s_{2} \cdots s_{n} 222 \cdots ) &= f_{\bs{s}}(1) =  b_{\bs{s}} 
\end{align*} 
for $\bs{s} = (s_{1}, s_{2} , \cdots , s_{n}) \in S^{\times n}$. 
We also have 
\[
\pi (s_{1} s_{2} \cdots ) = \sum_{i=1}^{\infty} \dfrac{2s_{i}}{5^{i}} 
\quad (s_{1}s_{2}\cdots \in \Sigma(S)). 
\] 
Since the map $\pi : \Sigma(S) \to K$ is bijective, 
the composition $T_{\rho} = \pi \circ \iota_{\rho} \circ \pi^{-1}$ 
induces a well-defined map for any permutation $\rho \colon S \to S$. 

We study the case for the transposition $\rho = (1\,2)$. 
In this case, we have 
\begin{align*} 
T_{\rho}(a_{\bs{s}}) &= \pi (\rho(s_{1}) \rho(s_{2}) \cdots \rho(s_{n}) 000\cdots ) = a_{\rho(\bs{s})}  \\ 
\shortintertext{and}
T_{\rho}(b_{\bs{s}}) &= \pi (\rho(s_{1}) \rho(s_{2}) \cdots \rho(s_{n}) 111  \cdots ) 
= a_{\rho(\bs{s})} + \sum_{i=n+1}^{\infty} \dfrac{2}{5^{i}}. 
\end{align*}  
Let us define $g = c_{2,1/3}$; see Definition \ref{def:c_{k,p}-definition}. 
Then we have 
\begin{align*} 
(g \circ T_{\rho})(b_{\bs{s}}) - (g \circ T_{\rho})(a_{\bs{s}}) 
&= 
\sum_{i=1}^{n} \dfrac{\rho(s_{i})}{3^{i}} + \sum_{i=n+1}^{\infty} \dfrac{1}{3^{i}} - \sum_{i=1}^{n} \dfrac{\rho(s_{i})}{3^{i}} \\ 
&= 
\sum_{i=n+1}^{\infty} \dfrac{1}{3^{i}} 
= \dfrac{1}{2 \cdot 3^{n}}. 
\end{align*} 
Therefore, we have 
\begin{align*} 
\phi^{(1)}_{n}(1 \circ T_{\rho} , g \circ T_{\rho}) 
&= 
2\sum_{\bs{s} \in S^{\times n}} 
\left( (g \circ T_{\rho})(b_{\bs{s}}) - (g \circ T_{\rho})(a_{\bs{s}})  \right) \\ 
&= 
\sum_{\bs{s} \in S^{\times n}} \dfrac{1}{3^{n}} 
= 1. 
\end{align*}  
This implies that a pair $(1 \circ T_{\rho} , g \circ T_{\rho})$ 
is integrable on $K^{(1)}$ and its integral satisfies 
$\phi^{(1)}(1 \circ T_{\rho} , g \circ T_{\rho}) = 1$. 
By Theorem \ref{thm:integration-c_{k,p}} (1), we have 
\[
\phi^{(1)}_{n}(1 \circ T_{\rho} , g \circ T_{\rho}) 
= 
\dfrac{1}{2}
\phi^{(2)}_{n}(1,g) \quad (= 1 \neq 0).  
\]

On the other hand, 
we next define 
\[
h\left( \sum_{i=1}^{\infty} \dfrac{2s_{i}}{5^{i}} \right) 
= 
\sum_{i=1}^{\infty} \dfrac{(-1)^{s_{i}}s_{i}}{3^{i}} 
\]
on $CS_{2}$. By the following calculations 
\begin{align*} 
h(b_{\bs{s}}) - h(a_{\bs{s}}) 
&= 
\sum_{i=1}^{n} \dfrac{(-1)^{s_{i}}s_{i}}{3^{i}} + 2\sum_{i=n+1}^{\infty} \dfrac{1}{3^{i}} - \sum_{i=1}^{n} \dfrac{(-1)^{s_{i}}s_{i}}{3^{i}} \\ 
&= 
2 \sum_{i=n+1}^{\infty} \dfrac{1}{3^{i}} 
= 
\dfrac{1}{3^{n}}\\ 
\shortintertext{and} 
(h \circ T_{\rho})(b_{\bs{s}}) - (h \circ T_{\rho})(a_{\bs{s}}) 
&= 
\sum_{i=1}^{n} \dfrac{(-1)^{\rho(s_{i})}\rho(s_{i})}{3^{i}} - \sum_{i=n+1}^{\infty} \dfrac{1}{3^{i}} - \sum_{i=1}^{n} \dfrac{(-1)^{\rho(s_{i})}\rho(s_{i})}{3^{i}} \\ 
&= 
-\sum_{i=n+1}^{\infty} \dfrac{1}{3^{i}} 
= 
-\dfrac{1}{2 \cdot 3^{n}},  
\end{align*} 
we have 
\begin{align*} 
\phi^{(2)}_{n}(1   , h  ) 
&= 
2\sum_{\bs{s} \in S^{\times n}} \left( h(b_{\bs{s}}) - h(a_{\bs{s}}) \right) 
= 
2 \\ 
\shortintertext{and} 
\phi^{(1)}_{n}(1 \circ T_{\rho} , h \circ T_{\rho}) 
&= 
2\sum_{\bs{s} \in S^{\times n}}
\left( (h \circ T_{\rho})(b_{\bs{s}}) - (h \circ T_{\rho})(a_{\bs{s}})  \right) 
= 
-1.    
\end{align*} 
This indicates 
that the pair $(1,h)$ (resp. $(1 \circ T_{\rho} , h \circ T_{\rho})$)  
is integrable on $K^{(2)}$ (resp. $K^{(1)}$) 
and its integral satisfies 
$\phi^{(2)}(1   , h  ) = 2$
(resp. $\phi^{(1)}(1 \circ T_{\rho} , h \circ T_{\rho}) = -1$). 
Therefore, we have 
\begin{equation*}  
\phi^{(1)}(1 \circ T_{\rho} , h \circ T_{\rho}) 
= 
-\dfrac{1}{2} \phi^{(2)}(1   , h  ) \quad (= -1 \neq 0). 
\end{equation*} 

The above examples show that 
there is not a constant $c$ dependent only on $\rho = (1\,2)$ that 
makes  
$\phi^{(1)}(f \circ T_{\rho} , g \circ T_{\rho}) = c \phi^{(2)}(f,g)$ 
valid. 

\section{Integration by parts}
\label{sec:parts}

Our integration also inherits integration by parts. 
This property also has  
examples for integrable pairs $(f,g)$ (see Example \ref{exm:(x^l,x^mc)}). 

\begin{theorem}[Integration by Parts] 
\label{thm:properties-phi}   
For any functions $f$ and $g$ on $K$, we have 
$\phi_{n}(1 , fg) = \phi_{n}(f,g) + \phi_{n}(g,f)$. 
If two of $(1,fg)$, $(f,g)$ and $(g,f)$ are integrable, 
the rest is also integrable and 
we have $\phi(1,fg) = \phi(f,g) + \phi(g,f)$.  
\end{theorem}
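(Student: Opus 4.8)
The plan is to prove the pointwise identity $\phi_n(1,fg) = \phi_n(f,g) + \phi_n(g,f)$ for each fixed $n$ by unwinding the definition of $\phi_n$ in terms of the trace $\mathrm{Tr}_{I_{\bs{s}}}$, and then to pass to the limit using the usual fact that if two of three sequences converge and satisfy a linear relation, then so does the third. The first step reduces everything to a single interval: by Lemma~\ref{lem:trace_riemannsum} we have, for each $\bs{s} \in S^{\times n}$, the explicit values
\[
\mathrm{Tr}_{I_{\bs{s}}}(f[F,g]M) = (f(a_{\bs{s}}) + f(b_{\bs{s}}))(g(b_{\bs{s}}) - g(a_{\bs{s}})),
\]
and similarly for $(g,f)$ and for $(1,fg)$. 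So the identity $\phi_n(1,fg) = \phi_n(f,g) + \phi_n(g,f)$ will follow once I verify the elementary algebraic identity
\[
(f(b)g(b) - f(a)g(a)) = (f(a)+f(b))(g(b)-g(a)) + (g(a)+g(b))(f(b)-f(a))
\]
holds after dividing by $2$ (since the constant function $1$ contributes $(1+1) = 2$ on the left factor). This is the core computation, and it is a purely formal expansion with cancellation of the cross terms $f(b)g(a)$ and $f(a)g(b)$; I expect it to occupy only a line or two.

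Concretely, I would first write out the summand for $\phi_n(f,g) + \phi_n(g,f)$ at a generic $\bs{s}$, setting $a = a_{\bs{s}}$, $b = b_{\bs{s}}$ for brevity, and expand
\[
(f(a)+f(b))(g(b)-g(a)) + (g(a)+g(b))(f(b)-f(a)).
\]
After distributing, the terms $f(b)g(b)$ and $-f(a)g(a)$ survive doubled, while the remaining four cross terms cancel in pairs, giving $2(f(b)g(b) - f(a)g(a))$. On the other side, since the first argument of $\phi_n(1,fg)$ is the constant function $1$, Lemma~\ref{lem:trace_riemannsum} gives the summand $(1+1)((fg)(b) - (fg)(a)) = 2(f(b)g(b) - f(a)g(a))$. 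The two match term-by-term in $\bs{s}$, so summing over $\bs{s} \in S^{\times n}$ yields the claimed equality of the finite sums $\phi_n$. No convergence or Hölder hypotheses enter at this stage, which is why the identity holds for arbitrary functions $f,g$ on $K$.

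For the second assertion, I would invoke linearity of limits: the relation $\phi_n(1,fg) = \phi_n(f,g) + \phi_n(g,f)$ exhibits one sequence as the sum (or difference) of the other two. If two of the three sequences $\{\phi_n(1,fg)\}$, $\{\phi_n(f,g)\}$, $\{\phi_n(g,f)\}$ converge, then the third equals a fixed linear combination of convergent sequences and hence converges, and the limits satisfy $\phi(1,fg) = \phi(f,g) + \phi(g,f)$ by continuity of addition. I anticipate no genuine obstacle here; the only point requiring a moment's care is the bookkeeping of which term is solved for in each of the three cases, but in every case the conclusion is immediate from the additivity of convergent limits. The main (and essentially the only) substantive step is thus the algebraic cancellation identity above.
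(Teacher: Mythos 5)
Your proof is correct, but it takes a different route from the paper's. You reduce everything to scalars via Lemma \ref{lem:trace_riemannsum} and verify the identity
\[
(f(a)+f(b))(g(b)-g(a)) + (g(a)+g(b))(f(b)-f(a)) = 2\bigl(f(b)g(b) - f(a)g(a)\bigr)
\]
by direct cancellation of cross terms, then sum over $\bs{s} \in S^{\times n}$. The paper instead works at the operator level: it applies the Leibniz rule $[F, fg] = f[F,g] + [F,f]g$ inside the trace, then uses the commutation relation $gM_{I} = M_{I}\begin{bmatrix} g(b) & 0 \\ 0 & g(a)\end{bmatrix}$ (multiplication by $M$ swaps the diagonal entries) together with cyclicity of the trace to identify the second term with the summand of $\phi_{n}(g,f)$. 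The two arguments are equivalent in substance --- both are per-interval computations summed over $\bs{s}$ --- but yours is more elementary and self-contained, while the paper's is more structural: it exhibits integration by parts as a direct consequence of the derivation property of the commutator, which is the formulation that carries over to the Fredholm-module/noncommutative setting the paper is built on. One small point in your favor: you spell out the limiting argument for the second assertion (solving for whichever of the three sequences is not assumed convergent), which the paper's proof leaves implicit.
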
 

\begin{proof}  
$\phi_{n}(1, fg)$ can be written as follows: 
\begin{align} 
  \phi_{n}\left(1, fg\right) 
  &= \sum_{\bs{s} \in S^{\times n}} \operatorname{Tr}\left(\left[F, fg\right]M\right) 
  =  \sum_{\bs{s} \in S^{\times n}} \operatorname{Tr}\left(\left(f\left[F, g\right] + \left[F, f\right]g\right)M\right). \label{middle_fg}   
\end{align}  
Note that, for any interval $I = [a,b]$, 
we have the following relation:
\begin{align*}
gM_{I} 
&= \begin{bmatrix} g(a) & 0 \\ 0 & g(b) \end{bmatrix} \cdot \begin{bmatrix} 0 & -1 \\ 1 & 0 \end{bmatrix} \\
&= \begin{bmatrix} 0 & -1 \\ 1 & 0 \end{bmatrix} \cdot \begin{bmatrix} g(b) & 0 \\ 0 & g(a) \end{bmatrix} = M_{I} \begin{bmatrix} g(b) & 0 \\ 0 & g(a) \end{bmatrix}.
\end{align*}
Therefore, we obtain
\begin{align*} 
(\ref{middle_fg}) 
&= \sum_{\bs{s} \in S^{\times n}} \operatorname{Tr}\left(f\left[F, g\right]M\right) + \sum_{\bs{s} \in S^{\times n}} \operatorname{Tr}\left(\left[F, f\right]M \begin{bmatrix} g(b_{\bs{s}}) & 0 \\ 0 & g(a_{\bs{s}}) \end{bmatrix}\right) \\ 
&=  \phi_{n}\left(f, g\right) + \sum_{\bs{s} \in S^{\times n}} \operatorname{Tr}\left(\begin{bmatrix} g(b_{\bs{s}}) & 0 \\ 0 & g(a_{\bs{s}}) \end{bmatrix}\left[F, f\right]M\right) \\
&=  \phi_{n}\left(f, g\right) + \phi_{n}\left(g, f\right).
\end{align*}   
\end{proof}

By Theorem \ref{thm:properties-phi}, 
we obtain as follows.  

\begin{example} 
\label{exm:(x^l,x^mc)}
Let $l,m \geq 0$ be integers. 
Then we have 
\[
\phi(x^{l}, x^{m}c) = \phi(x^{l+m}, c). 
\]
\end{example}
\begin{proof}
By Theorem \ref{thm:properties-phi} (2), we have 
\[
\phi_{n}(1, x^{m}c) = \phi_{n}(x^{m}, c) + \phi_{n}(c, x^{m}). 
\]
By Example \ref{phi_cantor}, $\phi_{n}(x^{m}, c)$ converges to $\phi(x^{m}, c)$ as $n \rightarrow \infty$. On the other hand, $\phi_{n}(c, x^{m}) \to 0$ by Theorem \ref{thm:vanish} because $x^{m}$ is a Lipschitz function. Hence, $\phi_{n}(1, x^{m}c)$ converges to $\phi(x^{m}, c)$.  
Moreover, we obtain 
\[
\phi_{n}(1, x^{l+m}c) = \phi_{n}(x^{l}, x^{m}c) + \phi_{n}(x^{m}c, x^{l}).
\]
Since $\phi_{n}(x^{m}c, x^{l}) \to  0$ ($n \to \infty$), we obtain
\[
\phi (x^{l}, x^{m}c) 
= 
\lim_{n \rightarrow \infty}\phi_{n}(x^{l}, x^{m}c) 
= 
\lim_{n \rightarrow \infty}\phi_{n}(1, x^{l+m}c) 
= \phi(x^{l+m}, c).
\]

\end{proof}

\section{Term-by-term Integration} 
\label{sec:term-by-term}

\subsection{Term-by-integration for self-similar set}

We prove 
a theorem of term-by-term integration. 

\begin{theorem}[Term-by-term Integration] 
\label{thm:termbyterm} 
Let $0 < \alpha < 1$ and $0 < \beta < 1$ 
such that $\alpha + \beta > \dim_{S}(K)$. 
Suppose that uniformly bounded sequenses $\{ f_{m} \} \subset C^{\alpha}(K)$ and $\{ g_{m} \} \subset C^{\beta}(K)$ satisfy the following properties: 
\begin{enumerate}  
\item $f_{m} \to f$ and $g_{m} \to g$ ($m \to \infty$) uniformly on $K$. 
\item 
$(f,g)$ is integrable. 
\item 
A series 
$\sum_{n=1}^{\infty} | \psi_{n} (f_{m}, g_{m}) | $ 
converges uniformly in $m$. 
\label{enumerate:term-by-term-unif}
\end{enumerate} 
Then we have 
$\lim_{m \to \infty}\phi(f_{m}, g_{m}) 
= \phi(f,g)$. 
\end{theorem}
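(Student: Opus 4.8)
The plan is to reduce the statement to the decomposition of $\phi$ into a geometrically convergent "increment series" plus the series $\sum_{n}\psi_{n}$, both of which can be bounded uniformly in $m$, and then to interchange the limit $m\to\infty$ with these series by an $\varepsilon/3$ (Moore--Osgood) argument. First I would record what the hypotheses give. Hypothesis (3) forces $\sum_{n}\psi_{n}(f_{m},g_{m})$ to converge absolutely for each $m$, so by Theorem \ref{thm:iff_condition} every pair $(f_{m},g_{m})$ is integrable and each $\phi(f_{m},g_{m})$ is defined. I read the uniform boundedness of $\{f_{m}\}\subset C^{\alpha}(K)$ and $\{g_{m}\}\subset C^{\beta}(K)$ as $C_{f}:=\sup_{m}|f_{m}|_{\alpha}<\infty$ and $C_{g}:=\sup_{m}|g_{m}|_{\beta}<\infty$ (together with uniform sup-norm bounds); as a uniform limit of such functions, $f\in C^{\alpha}(K)$ and $g\in C^{\beta}(K)$ with $|f|_{\alpha}\le C_{f}$, $|g|_{\beta}\le C_{g}$, and hypothesis (2) guarantees $\phi(f,g)$ exists.

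Next I would recall from the proof of Theorem \ref{thm:iff_condition} the telescoping identity $\phi_{n+1}(h_{1},h_{2})-\phi_{n}(h_{1},h_{2})=D_{n}(h_{1},h_{2})-\psi_{n+1}(h_{1},h_{2})$, where $D_{n}(h_{1},h_{2})$ is the sum of $\Delta$-terms obeying $|D_{n}(h_{1},h_{2})|\le 2|h_{1}|_{\alpha}|h_{2}|_{\beta}(2N+1)(b-a)\lambda^{n}$ with $\lambda=\sum_{s=0}^{N-1}r_{s}^{\alpha+\beta}<1$. Summing from level $L$ onward and letting the level tend to infinity gives, for every integrable pair and every $L\ge 1$,
\[
\phi(h_{1},h_{2})=\phi_{L}(h_{1},h_{2})+\sum_{n\ge L}D_{n}(h_{1},h_{2})-\sum_{n>L}\psi_{n}(h_{1},h_{2}).
\]
Applying this to $(f_{m},g_{m})$ and to $(f,g)$, I would then split
\begin{align*}
\phi(f_{m},g_{m})-\phi(f,g)
&=\big[\phi(f_{m},g_{m})-\phi_{L}(f_{m},g_{m})\big]\\
&\quad+\big[\phi_{L}(f_{m},g_{m})-\phi_{L}(f,g)\big]+\big[\phi_{L}(f,g)-\phi(f,g)\big].
\end{align*}

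For the first and third brackets, which are tails, the $D_{n}$-part is $\le 2C_{f}C_{g}(2N+1)(b-a)\lambda^{L}/(1-\lambda)$, a bound uniform in $m$ and also valid for $(f,g)$ since $|f|_{\alpha}\le C_{f}$, $|g|_{\beta}\le C_{g}$; the $\psi_{n}$-part $\sum_{n>L}|\psi_{n}(f_{m},g_{m})|$ is uniformly small in $m$ by hypothesis (3), and since for each fixed $n$ one has $\psi_{n}(f_{m},g_{m})\to\psi_{n}(f,g)$ (a finite sum of products of differences of function values that converge by uniform convergence), the same tail bound transfers to $(f,g)$ in the limit. Hence, given $\varepsilon>0$, I can fix $L$ so large that the first and third brackets are each $<\varepsilon/3$ for every $m$. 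With $L$ fixed, $\phi_{L}$ is a finite sum over $\bigcup_{n\le L}S^{\times n}$ of products of values of $f_{m},g_{m}$ at the endpoints $a_{\bs{s}},b_{\bs{s}}\in K$, so $\phi_{L}(f_{m},g_{m})\to\phi_{L}(f,g)$ by pointwise convergence; thus the middle bracket is $<\varepsilon/3$ for all large $m$, and combining the three pieces yields $\limsup_{m}|\phi(f_{m},g_{m})-\phi(f,g)|\le\varepsilon$.

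The main obstacle is precisely the interchange of the limit $m\to\infty$ with the infinite series defining $\phi$, and the two halves of this interchange are controlled by different hypotheses. The $D_{n}$-part is tamed by the uniform Hölder bounds $C_{f},C_{g}$ combined with the geometric decay $\lambda<1$ coming from $\alpha+\beta>\dim_{S}(K)$ — so the uniform $C^{\alpha}$, $C^{\beta}$ boundedness is genuinely used, not merely sup-norm boundedness — while the $\psi_{n}$-part is controlled exactly by hypothesis (3); without the latter the tails of $\sum_{n}\psi_{n}(f_{m},g_{m})$ could fail to be uniformly small and the interchange would break down. The only routine technical point to verify is that $\psi_{n}(f_{m},g_{m})\to\psi_{n}(f,g)$ and $\phi_{L}(f_{m},g_{m})\to\phi_{L}(f,g)$ for each fixed $n$ and $L$, which holds because these are finite sums and require only the pointwise convergence of $f_{m},g_{m}$ at the finitely many endpoints of the intervals $I_{\bs{s}}$ and holes $J_{\bs{s},l}$ involved.
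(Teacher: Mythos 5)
Your proposal is correct, and its analytic core coincides with the paper's: both rest on the uniform-in-$m$ tail estimate
\[
|\phi(f_{m},g_{m})-\phi_{L}(f_{m},g_{m})|
\le 2C_{f}C_{g}(2N+1)(b-a)\frac{\lambda^{L}}{1-\lambda}
+\sum_{n>L}|\psi_{n}(f_{m},g_{m})|,
\qquad \lambda=\sum_{s=0}^{N-1}r_{s}^{\alpha+\beta}<1,
\]
where the geometric part uses the uniform H\"older bounds and the $\psi$-part uses hypothesis (3). The difference is in the endgame. The paper first invokes multilinearity of $\phi$ to reduce to the claim that $\phi(f_{m},g_{m})\to 0$ when one factor tends to $0$ uniformly, and then disposes of the finite-level part by the crude bound $|\phi_{n}(f_{m},g_{m})|\le 4N^{n}\|f_{m}\|\|g_{m}\|\to 0$. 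You instead run a direct Moore--Osgood $\varepsilon/3$ argument, comparing $\phi_{L}(f_{m},g_{m})$ with $\phi_{L}(f,g)$ term by term. Your route is slightly longer but arguably more robust: the paper's reduction implicitly requires the hypotheses (in particular (3)) for the difference pairs $(f_{m}-f,g_{m})$ and $(f,g_{m}-g)$, which do not obviously inherit them from $(f_{m},g_{m})$, whereas your splitting only ever uses the hypotheses for the pairs $(f_{m},g_{m})$ and $(f,g)$ themselves. Two minor points in your write-up: $\phi_{L}$ is a sum over $S^{\times L}$ only, not over $\bigcup_{n\le L}S^{\times n}$ (harmless, since it is a finite sum either way); and your transfer argument bounding $\sum_{n>L}|\psi_{n}(f,g)|$ via $\psi_{n}(f_{m},g_{m})\to\psi_{n}(f,g)$ is dispensable, since hypothesis (2) already gives $\phi_{L}(f,g)\to\phi(f,g)$ as $L\to\infty$, so your third bracket is small for large $L$ with no decomposition needed.
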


\begin{proof}   
By the multilinearity of $\phi$,  
it suffices to show that 
$\phi (f_{m} , g_{m})$ converges to $0$ 
when one of $f_{m}$, $g_{m}$ converges to $0$.  

Let $n,m \in \mathbb{N}$.  
As is the proof of Theorem \ref{thm:iff_condition}, 
we have 
\begin{align*} 
&\phantom{=} |\phi (f_{m}, g_{m}) - \phi_{n}(f_{m} , g_{m})| \\ 
&\leq 
\sum_{r=n}^{\infty} 
|\phi_{r+1} (f_{m}, g_{m}) - \phi_{r}(f_{m} , g_{m})| \\ 
&\leq  
2|f|_{\alpha}|g|_{\beta} (2N+1) (b-a) \sum_{r=n}^{\infty}
\left( \sum_{s=0}^{N-1} r_{s}^{\alpha + \beta} \right)^{r} 
+ \sum_{r=n}^{\infty} |\psi_{r+1}(f_{m}, g_{m})|  \\  
&= 2|f|_{\alpha}|g|_{\beta} (2N+1) \left( \sum_{s=0}^{N-1} r_{s}^{\alpha + \beta} \right)^{n} \sum_{r=0}^{\infty} \left( \sum_{s=0}^{N-1} r_{s}^{\alpha + \beta} \right)^{r}
+ \sum_{r=n}^{\infty} |\psi_{r+1}(f_{m}, g_{m})|   
\end{align*} 
Thus, for any $\epsilon > 0$, 
there exists $n_{0} > 0$ such that we have 
\[ 
|\phi (f_{m}, g_{m}) - \phi_{n}(f_{m} , g_{m})| 
< \epsilon 
\] 
for any $n \geq n_{0}$ and $m \in \mathbb{N}$. 
Then, the above calculations imply
\begin{align*} 
 |\phi(f_{m}, g_{m})| 
&\leq 
|\phi (f_{m}, g_{m}) - \phi_{n}(f_{m} , g_{m})|  +  |\phi_{n}(f_{m} , g_{m})| \\ 
&< 
\epsilon +  4N^{n} \| f_{m} \| \| g_{m} \|
\end{align*} 
for any $n \geq n_{0}$. 
Therefore, we have 
\[
\limsup_{m \to \infty} |\phi(f_{m},g_{m})| 
\leq \epsilon. 
\]
This completes the proof. 
\end{proof}

\subsection{Necessity of (\ref{enumerate:term-by-term-unif}) in Theorem \ref{thm:termbyterm}}
\label{sec:example_term_by_term}

We give examples in which term-by-term integration does not hold when a sequence of functions does not meet (\ref{enumerate:term-by-term-unif}) in Theorem \ref{thm:termbyterm}. 
We first set 
$g_{m}  = c_{1, m/(2m+1)}$ 
for any $m \in \mathbb{N}$; see Definition \ref {def:c_{k,p}-definition}.  
By Theorem \ref{thm:unif-c_k,a}, $g_{m} \to c_{1,1/2}$ as $m \to \infty$ 
uniformly on $CS_{1}$.  

Let $f_{m} = 1$ for any $m \in \mathbb{N}$. Then, the pair $(f, g)$ of the respective limit of $\{f_{m}\}$ and $\{g_{m}\}$ is integrable, see Section \ref{subsec:2k+1-adic}. However, the series $\sum_{n=1}^{\infty}|\psi_{n} (f_{m}, g_{m})|$ 
converges but not uniformly in $m \in \mathbb{N}$, which concludes that 
$\{ f_{m} \}$ and $\{ g_{m} \}$ do not meet Theorem \ref{thm:termbyterm} (3). 
In fact, since we have  
\begin{align*} 
\psi_{n+1}(f_{m}, g_{m}) 
&= 
\sum_{\bs{s} \in S^{\times n} } \mathrm{Tr}_{J_{\bs{s},0}}([F,g_{m}]M)  
= 
2\sum_{\bs{s} \in S^{\times n}} (g_{m}(a_{(\bs{s},1)}) - g_{m}(b_{(\bs{s},0)})) \\ 
&= 
2\left( 1 - \dfrac{m}{2m+1} \right)^{2} \sum_{\bs{s} \in S^{\times n}} \left( \dfrac{m}{2m+1} \right)^{n} \\ 
&= 
2\left( 1 - \dfrac{m}{2m+1} \right)^{2} \left( \dfrac{2m}{2m+1} \right)^{n},  
\end{align*} 
the series $\sum_{n=1}^{\infty}|\psi_{n}(f_{m}, g_{m})|$ 
converges for any $m \in \mathbb{N}$ 
but not uniformly in $m \in \mathbb{N}$.  
On the other hand, by Theorem \ref{thm:integration-c_{k,p}},  
the limit 
$\lim_{m \to \infty}\phi(f_{m}, g_{m}) = 0$ 
does not coincide with 
$\phi(f,g) = 2$.  
Therefore,  
the assumption (3) in Theorem \ref{thm:termbyterm} 
is essential in term-by-term integration.

\section{A comment on non-continuous case}  
\label{sec:comment-non-conti}

The Young integration introduced in \cite{MR1555421} is defined on the space of Wiener class $W_{p}$. 
The class $W_{p}$ is a set of functions whose $p$-variation is bounded, and in general can include non-continuous functions. 
Our integral can be calculated
for some non-continuous functions as follows:  
Set 
\begin{align*}
h(x) = \begin{dcases*}
    $0$  & for $0 \leq x < \frac{1}{3}$, \\
    $1$  & for $\frac{1}{3} \leq x \leq 1$ 
\end{dcases*}  
\end{align*} 
on $CS$. 
The function $h$ is obviously non-continuous at $x = \frac{1}{3}$, 
and for each $\bs{s} = (s_{1}, \dots , s_{n}) \in S^{\times n}$, we have
\begin{align*}
\mathrm{Tr}_{I_{\bs{s}}}(h[F, h]M) 
= \begin{dcases*}
    $1$ & for $s_{1} = 0$ \text{and} $s_{2} = \dots = s_{n} = 1$, \\
    $0$ & \text{otherwise}.
\end{dcases*}
\end{align*} 
Therefore, because of $\phi_{n}(h, h) = 1$ for any  $n \in \mathbb{N}$,  
we can conclude that the pair $(h, h)$ is integrable and 
we have $\phi(h,h) = 1$. 

To our knowledge there is no proper definition of 
``$p$-variation'' for the Cantor set yet. 
We leave the extension of our integration to the setting of `Wiener class' for the future study.

\appendix 

\section{The Young integration on the interval} 
\label{app:Young}

We recall the definition of the Young integration and its existence by referring to \cite{MR1555421}. Let $I$ be an interval $[ a, b ]$.  

\begin{definition}    
\label{def:stieltjes_integral}
Let $f, g: I \rightarrow \mathbb{R}$. We say that the Stieltjes integral
\[ 
Y(f,g) 
= 
\int_{a}^{b} f(x) d g(x) 
\] 
exists in the Riemann sense with the value $J$, if the sum $\sum_{r=1}^{N} f(\xi_{r})\left( g(x_{r}) - g(x_{r-1})\right)$, in which $a=x_{0} \leq \xi_{1} \leq x_{1} \leq \cdots \leq x_{N-1} \leq \xi_{N} \leq x_{N} = b$, differs from $J$ by at most $\epsilon_{\delta}$ in modulus, when all $[ x_{r-1}, x_{r}]$ have lengths less than $\delta$, where $\epsilon_{\delta} \rightarrow 0$ as $\delta \rightarrow 0$.
\end{definition}

We associate with $f(x): I \rightarrow \mathbb{R}$ and with $p > 0$, $\delta > 0$, the quantity
\[V_{p}^{(\delta)}(f) = V_{p}^{(\delta)}(f;a, b) = \sup_{|\chi| \leq \delta}\left( \sum_{r} \left|f(x_{r}) - f(x_{r-1})\right|^{p}\right)^{\frac{1}{p}}\]
for all subdivisions $\chi$ of $[ a , b ]$ into parts $[ x_{r-1}, x_{r} ]$ of lengths each less than $\delta$. We write $V_{p}(f)$ as the value of $V_{p}^{(\delta)}(f)$ for $\delta$ exceeding $b - a$ and call $V_{p}(f)$ \textit{the mean variation of order} $p$. We shall say that $f$ belongs to the Wiener class $W_{p}$  
if $V_{p}(f)$ is finite. 
Note that if $f$ is $1/p$-H\"older continuous function on $I$ then 
we have $f \in W_{p}$.  

\begin{theorem}[Existence on Stieltjes integral]
\label{thm:existence_stieltjes}
Let $f \in W_{p}$ and $g \in W_{q}$. If $\frac{1}{p} + \frac{1}{q} > 1$ and $f, g$ have no common discontinuities, their Stieltjes integral exists in the Riemann sense.
\end{theorem}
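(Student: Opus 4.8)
The plan is to show that the net of Riemann--Stieltjes sums in Definition \ref{def:stieltjes_integral} is Cauchy as the mesh tends to $0$, following Young's original point-dropping argument. Write $\Delta_i f = f(x_i) - f(x_{i-1})$ and $\Delta_i g = g(x_i) - g(x_{i-1})$ for a partition $a = x_0 < \dots < x_n = b$. First I would record the elementary identity describing the effect of deleting one interior node $x_i$ from the left-endpoint sum $\sum_i f(x_{i-1})\,\Delta_i g$: merging the intervals $[x_{i-1},x_i]$ and $[x_i,x_{i+1}]$ changes the sum by exactly $(f(x_i)-f(x_{i-1}))(g(x_{i+1})-g(x_i))$. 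Thus the cost of coarsening is controlled termwise by the product of an increment of $f$ with an increment of $g$.

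The heart of the matter is then a pigeonhole estimate producing a cheap node to delete. Put $s = \tfrac1p + \tfrac1q > 1$ and $\theta = 1/s \in (0,1)$, so that $p/\theta$ and $q/\theta$ are conjugate exponents. By H\"older's inequality applied to this pair,
\[
\sum_{i=1}^{n-1} \bigl(|\Delta_i f|\,|\Delta_{i+1} g|\bigr)^{\theta}
\le
\Bigl(\sum_{i=1}^{n-1} |\Delta_i f|^p\Bigr)^{\theta/p}\Bigl(\sum_{i=1}^{n-1} |\Delta_{i+1} g|^q\Bigr)^{\theta/q}
\le
V_p(f)^{\theta}\, V_q(g)^{\theta}.
\]
Averaging over the $n-1$ interior nodes yields a node whose deletion costs at most $V_p(f)V_q(g)/(n-1)^{s}$. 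Crucially, since the variation functionals only decrease under coarsening, the bound with the full $V_p(f),V_q(g)$ persists as I delete nodes one at a time.

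Third, I would telescope: deleting nodes successively from an $n$-interval partition down to the trivial partition $\{a,b\}$ incurs a total error at most $V_p(f)V_q(g)\sum_{k\ge1} k^{-s}$, which converges because $s>1$. This is the Love--Young bound, and localizing it to a subinterval $[u,v]$ shows that any sum over a refinement of $[u,v]$ differs from $f(u)(g(v)-g(u))$ by at most a fixed constant times the local variations $V_p(f;u,v)\,V_q(g;u,v)$. Comparing any two tagged partitions to their common refinement and applying this localized estimate interval-by-interval reduces the Cauchy property to showing that the local contributions become uniformly small once the mesh is small.

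Finally, this last convergence step is where I expect the main obstacle, and it is where the structural hypotheses enter. I would isolate the finitely many large jumps: since $f\in W_p$ and $g\in W_q$ have only countably many discontinuities and no common ones, at each node the product of a genuine jump of one function against the increment of the other is tamed, because the other function is continuous there and refining makes its local increment small; the only place where the choice of tag $\xi_r$ could produce a genuine ambiguity is a common discontinuity, which is excluded by hypothesis. Away from the jumps, uniform continuity forces each local variation factor to be small as the mesh shrinks, so the localized Love--Young bound drives the difference of any two admissible sums below the prescribed $\epsilon_\delta$, with $\epsilon_\delta \to 0$ as $\delta \to 0$. Assembling these pieces produces the common limit $J = Y(f,g)$ in the sense of Definition \ref{def:stieltjes_integral}.
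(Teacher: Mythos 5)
You should first note a mismatch of premise: the paper does not prove Theorem \ref{thm:existence_stieltjes} at all --- Appendix \ref{app:Young} merely recalls it from Young's original paper \cite{MR1555421}. So the only meaningful comparison is with Young's classical argument, and your sketch does follow that route. Your first three steps are correct and are exactly the core of that argument: the identity that deleting an interior node changes a left-endpoint sum by $(f(x_i)-f(x_{i-1}))(g(x_{i+1})-g(x_i))$; the H\"older/pigeonhole step with $\theta = 1/s$, where $p/\theta$ and $q/\theta$ are conjugate precisely because $s=\tfrac1p+\tfrac1q$; and the telescoping bound $V_p(f)V_q(g)\sum_{k\ge 1}k^{-s} = \zeta(s)V_p(f)V_q(g)$, which is the Love--Young maximal inequality.

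There is, however, a genuine gap in the last step, and it sits exactly where the real difficulty of Young's theorem lives. The assertion that ``away from the jumps, uniform continuity forces each local variation factor to be small as the mesh shrinks'' is not a valid inference: small oscillation does not control $p$-variation. On an interval of length $\delta$ a continuous function can oscillate $N$ times with amplitude $\epsilon$, giving $V_p(f;u,v)\approx (N\epsilon^p)^{1/p}$, which need not be small however small $\delta$ and $\epsilon$ are. The true statement --- if $f$ has \emph{finite global} $p$-variation and is continuous at $x$, then $V_p(f;x,y)\to 0$ as $y\to x$, uniformly in $x$ by compactness --- requires a separate lemma whose proof uses the superadditivity of $V_p^p$ over disjoint subintervals (otherwise one extracts infinitely many disjoint intervals each carrying $p$-variation bounded below, contradicting $V_p(f)<\infty$); uniform continuity alone is never enough. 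The handling of jumps has the same defect in sharper form: what must be shown is that $\sum_r V_p(f;I_r)\,V_q(g;I_r)\to 0$ over partitions of mesh $\le\delta$, and since $\tfrac1p+\tfrac1q>1$ rather than $=1$, H\"older has to be applied with intermediate conjugate exponents $p<p'$, $q<q'$, $\tfrac1{p'}+\tfrac1{q'}=1$, which produces factors $\bigl(\max_r V_p(f;I_r)\bigr)^{(p'-p)/p'}$ and $\bigl(\max_r V_q(g;I_r)\bigr)^{(q'-q)/q'}$; these maxima do \emph{not} tend to zero when jumps are present, so one is forced into the bookkeeping of finitely many large jumps of each function, each tamed by the vanishing local variation of the \emph{other} function at that point (this is where ``no common discontinuities'' enters, again via the unproved lemma above). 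Your outline correctly locates the obstacle but the justification offered for it would fail; to complete the proof you must state and prove the local-variation lemma and carry out the intermediate-exponent estimate.
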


\section{Generalization of the Cantor function on the $(2k+1)$-adic Cantor set} 
\label{sec:generalized-Cantor-function}

For a positive integer $k \in \mathbb{N}$ and $s \in S = \{ 0,1,\dots k \}$, 
we set  
\[
f_{s}(\bs{x}) = \dfrac{x}{2k+1} + \dfrac{2s}{2k+1} 
\quad (x \in \mathbb{R}). 
\]
Then we call the self-similar set $CS_{k}$ of the IFS $\left(I = [0,1] , S , \{ f_{s} \}_{s \in S} \right)$ the $(2k+1)$-adic Cantor set. 
By definition, we have  $\dim_{S}(CS_{k}) = \log_{(2k+1)}(k+1)$. 
It is easy to show that any element $x \in CS_{k}$ is given by the formula 
\[
x = \sum_{i=1}^{\infty} \dfrac{2s_{i}}{(2k+1)^{i}} 
\quad (s_{i} \in S). 
\]
In particular, we have 
\[
a_{\bs{s}}
= 
\sum_{i=1}^{n} \dfrac{2s_{i}}{(2k+1)^{i}}
\quad \text{and} \quad 
b_{\bs{s}} 
= 
a_{\bs{s}} + \sum_{i=n+1}^{\infty} \dfrac{2k}{(2k+1)^{i}} \quad (\bs{s} \in S^{\times n}). 
\] 
See Section \ref{sec:prelim} for the definition of $a_{\bs{s}}$ and $b_{\bs{s}}$. 
We want to generalize the Cantor function on the middle-third Cantor set $CS = CS_{1}$ to $CS_{k}$. 
The following definition is almost the same as the one in \cite{MR3713565}, but slightly different. 

\begin{definition} 
\label{def:c_{k,p}-definition} 
For $0 < p < 1$, we set 
\[
c_{k,p}\left( \sum_{i=1}^{\infty} \dfrac{2s_{i}}{(2k+1)^{i}} \right) 
= 
\dfrac{1-p}{k} \sum_{i=1}^{\infty}s_{i}p^{i-1} 
\quad (s_{i} \in S). 
\] 
\end{definition} 

A straightforward calculation implies the following basic properties: 

\begin{proposition} 
\label{prop:c_{k,a}-property} 
For $0 < p < 1$, we have the followings: 
\begin{enumerate} 
\item 
$c_{1,1/2} = c$, the Cantor function on the middle-third Cantor set. 
\label{item:c_{1,1/2}}
\item 
$c_{k,1/(2k+1)} = x|_{CS_{k}}$.  
\label{item:c_{k,1/(2k+1)}}
\item 
$c_{k,p}(0) = 0$. 
\item 
$c_{k,p}\left( \frac{1}{(2k+1)^{n}} \right) 
= p^{n} $ 
for any non-negative integer $n=0,1,2,\dots$. 
In particular, we have $c_{k,p}(1) = 1$. 
\end{enumerate} 
\end{proposition}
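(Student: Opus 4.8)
The plan is to establish each of the four claims by direct substitution into the defining formula of $c_{k,p}$ in Definition \ref{def:c_{k,p}-definition}, exploiting the base-$(2k+1)$ expansion of points of $CS_{k}$ recorded just before the statement. The whole proposition is computational rather than conceptual, so the work amounts to bookkeeping the digit sequences $s_{i} \in S = \{0,1,\dots,k\}$ and evaluating the geometric series $\sum_{i}p^{i-1}$ that appears.

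First I would treat (3) and (4), since they are the most mechanical. For (3), the point $0$ has the expansion with every digit $s_{i}=0$, so the sum $\sum_{i=1}^{\infty}s_{i}p^{i-1}$ vanishes term by term and $c_{k,p}(0)=0$. For (4), I would identify the base-$(2k+1)$ expansion of $1/(2k+1)^{n}$ inside $CS_{k}$: using the relation $\tfrac{2k}{2k+1}\sum_{i=1}^{\infty}(2k+1)^{-i}=1$ one sees $\frac{1}{(2k+1)^{n}}$ corresponds to the digit string that is $0$ for $i \leq n$ and $k$ for $i > n$ (the ``right endpoint'' type tail). Plugging this into the definition gives $c_{k,p}(1/(2k+1)^{n}) = \frac{1-p}{k}\sum_{i=n+1}^{\infty}k\,p^{i-1} = (1-p)p^{n}\sum_{j=0}^{\infty}p^{j}=p^{n}$, and the case $n=0$ yields $c_{k,p}(1)=1$.

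Next I would verify (1) and (2), which are special values of $p$. For (2), set $p=1/(2k+1)$; then $\frac{1-p}{k}=\frac{2}{2k+1}$ and $c_{k,p}\bigl(\sum_{i}\tfrac{2s_{i}}{(2k+1)^{i}}\bigr)=\sum_{i}\tfrac{2s_{i}}{(2k+1)^{i}}$, which is exactly the identity map restricted to $CS_{k}$, establishing $c_{k,1/(2k+1)}=x|_{CS_{k}}$. For (1), set $k=1$ and $p=1/2$; then $S=\{0,1\}$, the points of $CS_{1}=CS$ have the standard ternary expansion $\sum_{i} 2s_{i}/3^{i}$, and the formula reduces to $c_{1,1/2}(x)=\sum_{i}s_{i}2^{-i}$, which is the classical description of the Cantor function, so I would either cite or briefly recall this standard fact to conclude $c_{1,1/2}=c$.

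The only point requiring genuine care — and hence the main obstacle, modest as it is — is the well-definedness implicit in (4): points of the form $1/(2k+1)^{n}$ are right endpoints of basic intervals and therefore admit two symbolic representations (the ``$k\,k\,k\cdots$'' tail versus a terminating one), so I must confirm that $c_{k,p}$ is evaluated on the tail representation consistent with Definition \ref{def:c_{k,p}-definition} and that the value is unambiguous. I would address this by checking that the defining formula assigns the same value to the two admissible expansions at such endpoints, which follows from the same geometric-series identity used above; this is the analogue of the familiar fact that the Cantor function is well defined at the interval endpoints. With that consistency noted, all four statements follow from the indicated substitutions.
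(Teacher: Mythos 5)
Your proof is correct and is precisely the ``straightforward calculation'' that the paper leaves implicit: direct substitution of the digit expansions into the defining formula of $c_{k,p}$, together with the geometric series $\sum_{i=n+1}^{\infty}p^{i-1}=p^{n}/(1-p)$. The one remark worth adding is that your well-definedness concern at the points $1/(2k+1)^{n}$ is vacuous: each point of $CS_{k}$ has a \emph{unique} expansion $\sum_{i}2s_{i}/(2k+1)^{i}$ with $s_{i}\in\{0,1,\dots,k\}$ (the competing terminating base-$(2k+1)$ expansion carries an odd digit $1$, hence is inadmissible), so no consistency check between two representations is needed.
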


Moreover, $c_{k,p}$ satisfies the following properties:  

\begin{theorem} 
\label{thm:c_{k,a}-Holder} 
For $0 < p < 1$,  
$c_{k,p}$ is a $\log_{(2k+1)}(p^{-1})$-H\"{o}lder continuous function 
on $CS_{k}$.  
\end{theorem}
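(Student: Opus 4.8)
The plan is to verify the defining inequality $|c_{k,p}|_\alpha < \infty$ directly from the base-$(2k+1)$ digit expansion, with $\alpha = \log_{(2k+1)}(p^{-1})$. The crucial numerical fact is that $(2k+1)^{-\alpha} = p$, so that $(2k+1)^{-n\alpha} = p^{n}$; the whole argument amounts to matching a value-oscillation bound of order $p^{n}$ against a spatial-separation bound of order $(2k+1)^{-(n+1)}$ and exploiting this exact relation. First I would record that every $x \in CS_{k}$ has a \emph{unique} expansion $x = \sum_{i=1}^{\infty} 2s_{i}/(2k+1)^{i}$ with $s_{i} \in S$: the only possible ambiguity in base $2k+1$ is the carrying identity $\dots d\,(2k)(2k)\dots = \dots(d{+}1)\,00\dots$, but since $d$ and $2k$ are even while $d+1$ is odd, the alternative representation leaves the admissible even digits. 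Consequently, $x$ lying in the level-$n$ cylinder $I_{\bs{s}}$ ($\bs{s}\in S^{\times n}$) is equivalent to $x$ having $s_{1},\dots,s_{n}$ as its first $n$ digits, which makes $c_{k,p}$ well defined and ties the cylinder structure to the digits.

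Next I would establish the oscillation bound: if $x,y \in CS_{k}$ share their first $n$ digits, then
\[
|c_{k,p}(x) - c_{k,p}(y)|
= \frac{1-p}{k}\left| \sum_{i=n+1}^{\infty} (u_{i} - v_{i}) p^{i-1} \right|
\leq \frac{1-p}{k}\, k \sum_{i=n+1}^{\infty} p^{i-1}
= p^{n},
\]
where $u_{i},v_{i}\in\{0,\dots,k\}$ denote the tail digits; this is the same geometric summation already used in the proof of Theorem \ref{thm:integration-c_{k,p}} to obtain $c_{k,p}(b_{\bs{s}})-c_{k,p}(a_{\bs{s}})=p^{n}$, now applied to arbitrary points of the cylinder rather than its endpoints. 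Note that no monotonicity of $c_{k,p}$ is required, which matters because $c_{k,p}$ need not be monotone once $p>1/(k+1)$.

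Then, given distinct $x,y\in CS_{k}$, I would let $n$ be the largest number of leading digits they share, so that $x$ and $y$ belong to distinct level-$(n+1)$ sub-cylinders of their common $I_{\bs{s}}$. Since consecutive level-$(n+1)$ cylinders are separated by a hole $J_{\bs{s},l}$ of width $(2k+1)^{-(n+1)}$, we get $|x-y| \geq (2k+1)^{-(n+1)}$, hence $|x-y|^{\alpha} \geq (2k+1)^{-(n+1)\alpha} = p^{\,n+1}$. Combining with the oscillation bound yields
\[
\frac{|c_{k,p}(x)-c_{k,p}(y)|}{|x-y|^{\alpha}} \leq \frac{p^{n}}{p^{\,n+1}} = \frac{1}{p},
\]
so $|c_{k,p}|_{\alpha} \leq 1/p < \infty$, which proves the claim. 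I expect the main obstacle to be the clean derivation of the separation lower bound $|x-y|\geq (2k+1)^{-(n+1)}$: it rests on the uniqueness of the even-digit expansion and on the precise fact that each gap $J_{\bs{s},l}$ has length $(2k+1)^{-(n+1)}$, equal to the cylinder length, so some care is needed to argue that points in different level-$(n+1)$ cylinders are indeed separated by at least one full hole.
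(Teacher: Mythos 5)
Your proposal is correct and follows essentially the same route as the paper: both arguments rest on the even-digit base-$(2k+1)$ expansion, the geometric-series oscillation bound $|c_{k,p}(x)-c_{k,p}(y)|\leq p^{n}$ over a shared digit prefix, and the exact relation $(2k+1)^{-\alpha}=p$ for $\alpha=\log_{(2k+1)}(p^{-1})$, arriving at the same H\"{o}lder constant $p^{-1}$. The only difference is the direction of the scale argument: you fix $n$ as the number of shared leading digits and then lower-bound $|x-y|$ by the width $(2k+1)^{-(n+1)}$ of a separating hole, whereas the paper fixes $n$ by $(2k+1)^{-(n+1)}<|x-y|\leq (2k+1)^{-n}$ and deduces the shared digits --- a contrapositive reformulation of the same geometric fact.
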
 

\begin{proof} 
Take any $x,y \in CS_{k}$ 
with $x = \sum_{i=1}^{\infty} \frac{2s_{i}}{(2k+1)^{i}}$ 
and $y = \sum_{i=1}^{\infty} \frac{2t_{i}}{(2k+1)^{i}}$ 
such that 
\[ 
\dfrac{1}{(2k+1)^{n+1}} < |x-y| \leq \dfrac{1}{(2k+1)^{n}}
\] 
for a non-negative integer $n$. 
Then we have $s_{i} = t_{i}$ ($i = 1,2,\dots n$) and $s_{n+1} = t_{n+1}$, and 
\begin{align*} 
|c_{k,p}(x) - c_{k,p}(y)| 
&= 
\dfrac{1-p}{k}
\left|  \sum_{i=n+1}^{\infty}  (s_{i} - t_{i})p^{i-1} \right|   
\leq 
(1-p)
\sum_{i=n+1}^{\infty} p^{i-1}  \\ 
&= 
p^{n} 
= 
p^{-1} \left( \dfrac{1}{(2k+1)^{n+1}} \right)^{\log_{(2k+1)}(p^{-1})} \\ 
&\leq 
p^{-1} |x-y|^{\log_{(2k+1)}(p^{-1})}. 
\end{align*} 
This implies that the function $c_{k,p}$ is a H\"{o}lder continuous function 
with the H\"{o}lder exponent $\log_{(2k+1)}(p^{-1})$. 
\end{proof}

\begin{theorem} 
\label{thm:unif-c_k,a}
For any fixed $0 < p_{0} < 1$, 
$c_{k,p}$ converges to $c_{k,p_{0}}$ as $p \to p_{0}$ 
uniformly on $CS_{k}$. 
\end{theorem}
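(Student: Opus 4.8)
The plan is to reduce the uniform convergence on $CS_k$ to a statement about a family of weights that is independent of the point $x \in CS_k$. Writing $x = \sum_{i=1}^{\infty} 2s_i/(2k+1)^i$ with $s_i \in S = \{ 0,1,\dots,k \}$, the definition of $c_{k,p}$ gives
\[
c_{k,p}(x) - c_{k,p_0}(x) = \sum_{i=1}^{\infty} \frac{s_i}{k}\bigl( w_i(p) - w_i(p_0) \bigr),
\]
where I set $w_i(p) = (1-p)p^{i-1}$. Since $0 \le s_i/k \le 1$ for every $i$, I obtain the pointwise bound
\[
|c_{k,p}(x) - c_{k,p_0}(x)| \le \sum_{i=1}^{\infty} |w_i(p) - w_i(p_0)|,
\]
and crucially the right-hand side no longer depends on $x$. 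Hence it suffices to prove that $\sum_{i=1}^{\infty} |w_i(p) - w_i(p_0)| \to 0$ as $p \to p_0$.

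The main tool is the identity $\sum_{i=1}^{\infty} w_i(p) = (1-p)\sum_{i=1}^{\infty} p^{i-1} = 1$, valid for every $p \in (0,1)$, together with the explicit tail $\sum_{i>M} w_i(p) = p^M$. First I would restrict attention to $p$ in a fixed closed subinterval $[p_0 - \delta_0, p_0 + \delta_0] \subset (0,1)$ and put $p_1 = p_0 + \delta_0 < 1$. Given $\epsilon > 0$, I then choose $M$ so large that both $p_1^M < \epsilon/4$ and $p_0^M < \epsilon/4$; since $p \le p_1$ this controls the tail via $\sum_{i>M} |w_i(p) - w_i(p_0)| \le \sum_{i>M} w_i(p) + \sum_{i>M} w_i(p_0) = p^M + p_0^M < \epsilon/2$. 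For the finite head $i \le M$, each $w_i$ is a polynomial in $p$, hence continuous, so there is $\delta \in (0,\delta_0]$ with $|w_i(p) - w_i(p_0)| < \epsilon/(2M)$ for all $i \le M$ whenever $|p - p_0| < \delta$; summing gives $\sum_{i \le M} |w_i(p) - w_i(p_0)| < \epsilon/2$. Adding the two estimates yields $\sum_{i=1}^{\infty} |w_i(p) - w_i(p_0)| < \epsilon$.

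Because this final estimate is uniform in $x$ by construction, it gives $\sup_{x \in CS_k} |c_{k,p}(x) - c_{k,p_0}(x)| < \epsilon$ whenever $|p - p_0| < \delta$, which is exactly the claimed uniform convergence. I expect the only delicate point to be the tail: a naive term-by-term continuity argument does not by itself give uniform control over infinitely many terms. The device that resolves this is the normalization $\sum_i w_i(p) = 1$, which turns the tail into the closed form $p^M$ and lets me dominate the tails of both weight families simultaneously (a discrete Scheffé-type argument). Everything else is a routine head/tail split.
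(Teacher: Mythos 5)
Your proof is correct, and it takes a genuinely different route from the paper's at the decisive step. Both arguments start identically: using $0 \le s_i/k \le 1$, the difference $|c_{k,p}(x)-c_{k,p_0}(x)|$ is dominated by a series in the weights $w_i(p)=(1-p)p^{i-1}$ that no longer depends on $x$. The paper then proceeds quantitatively: it factors $|p^{i}-p_0^{i}| = |p-p_0|\sum_{j=0}^{i-1}p^{j}p_0^{i-1-j} \le i\,|p-p_0|\,(p_0+\epsilon_0)^{i-1}$ for $p$ in an $\epsilon_0$-neighborhood of $p_0$ and sums the differentiated geometric series $\sum_{i\ge1} i\,r^{i-1} = (1-r)^{-2}$, arriving at the explicit bound $|c_{k,p}(x)-c_{k,p_0}(x)| \le 2|p-p_0|/\bigl(1-(p_0+\epsilon_0)\bigr)^{2}$, i.e., a local Lipschitz modulus for $p \mapsto c_{k,p}$ in the sup norm. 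You instead exploit the normalization $\sum_{i\ge1}w_i(p)=1$ and the closed-form tail $\sum_{i>M}w_i(p)=p^{M}$ to run a head/tail (discrete Scheff\'{e}-type) argument: nonnegativity of the weights controls the tail uniformly for $p$ near $p_0$, and continuity of the finitely many polynomials $w_1,\dots,w_M$ handles the head. Your approach is softer and more portable — it would apply verbatim to any family of nonnegative, parameter-continuous weights with uniformly summable tails, without any algebraic factorization — but it yields no explicit rate of convergence, whereas the paper's chain of inequalities gives the stronger quantitative conclusion. Both arguments are complete proofs of the theorem.
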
 

\begin{proof} 
Fix any $\epsilon_{0} > 0$ such that $\epsilon_{0} < p_{0} < 1 - \epsilon_{0}$. 
Let $\epsilon > 0$ such that $\epsilon < \epsilon_{0}$. 
For any $x = \sum_{i=1}^{\infty} \frac{2s_{i}}{(2k+1)^{i}} \in CS_{k}$ 
and $0 < p < 1$ such that $|p - p_{0}| < \epsilon$, 
we have 
\begin{align*}
    |c_{k,p}(x) - c_{k,p_{0}}(x)| 
    &\leq 
    \dfrac{1}{k}\sum_{i=1}^{\infty} s_{i}|(1-p)p^{i} - (1-p_{0})p_{0}^{i}| \\ 
    &\leq 
    \sum_{i=1}^{\infty} \left( |p^{i} - p_{0}^{i}| + |p^{i+1} - p_{0}^{i+1}| \right) \\ 
    &= 
    |p-p_{0}|\sum_{i=1}^{\infty} \left( \sum_{k=0}^{i-1}p^{k}p_{0}^{i-1-k} + \sum_{k=0}^{i}p^{k}p_{0}^{i-k} \right) \\ 
    &\leq  
    2|p - p_{0}| \sum_{i=1}^{\infty}   i (p_{0} + \epsilon_{0})^{i-1}  \\ 
    &\leq  
    2|p - p_{0}| \dfrac{1}{(1 - (p_{0} + \epsilon_{0}))^{2}} \\ 
    &< 
    \dfrac{2\epsilon}{(1 - (p_{0} + \epsilon_{0}))^{2}} . 
\end{align*} 
This proves the theorem. 
\end{proof} 

\subsection*{Acknowledgments} Tatsuki Seto was supported by JSPS KAKENHI Grant Number  21K13795.

\bibliographystyle{plain}
\bibliography{ref_Young_CS}

\begin{thebibliography}{10}

\bibitem{MR3713565}
C.~Baiocchi, V.~Komornik, and P.~Loreti.
\newblock Cantor type functions in non-integer bases.
\newblock {\em Acta Math. Hungar.}, 153(1):83--108, 2017.

\bibitem{MR2357322}
E.~Christensen, C.~Ivan, and M.~L. Lapidus.
\newblock Dirac operators and spectral triples for some fractal sets built on
  curves.
\newblock {\em Adv. Math.}, 217(1):42--78, 2008.

\bibitem{MR1303779}
A.~Connes.
\newblock {\em Noncommutative geometry}.
\newblock Academic Press, Inc., San Diego, CA, 1994.

\bibitem{MR4562886}
A.~Gibbs, D.~Hewett, and A.~Moiola.
\newblock Numerical quadrature for singular integrals on fractals.
\newblock {\em Numer. Algorithms}, 92(4):2071--2124, 2023.

\bibitem{MR3743228}
D.~Guido and T.~Isola.
\newblock Spectral triples for nested fractals.
\newblock {\em J. Noncommut. Geom.}, 11(4):1413--1436, 2017.

\bibitem{MR1840042}
J.~Kigami.
\newblock {\em Analysis on fractals}, volume 143 of {\em Cambridge Tracts in
  Mathematics}.
\newblock Cambridge University Press, Cambridge, 2001.

\bibitem{phdmaruyama}
T.~Maruyama.
\newblock {\em Cyclic Cohomology Groups of Some Self-similar Sets}.
\newblock Ph.d thesis, Nagoya University, Furocho, Nagoya, Aichi, Japan, March
  2017.
\newblock Available at \url{https://nagoya.repo.nii.ac.jp/records/23787}.

\bibitem{MR4651742}
T.~Maruyama and T.~Seto.
\newblock A combinatorial {F}redholm module on self-similar sets built on
  {$n$}-cubes.
\newblock {\em J. Fractal Geom.}, 10(3-4):303--332, 2023.

\bibitem{MR4688447}
T.~Maruyama and T.~Seto.
\newblock A combinatorial integration on the {C}antor dust.
\newblock {\em Math. J. Okayama Univ.}, 66:115--124, 2024.

\bibitem{talk:Moriyoshi2013}
H.~Moriyoshi.
\newblock On a cyclic volume cocycle in fractal geometry.
\newblock talk on ``Further developement of Atiyah-Singer Index Theory'', 2013
  (based on joint work with T. Natsume).

\bibitem{MR1555421}
L.~C. Young.
\newblock An inequality of the {H}{\"o}lder type, connected with stieltjes
  integration.
\newblock {\em Acta Mathematica}, 67:251--282, 1936.

\end{thebibliography}

\end{document}